\newtheorem{thm}{Theorem}[section]
\newtheorem*{thm*}{Theorem}
\newtheorem{lem}[thm]{Lemma}
\newtheorem{prop}[thm]{Proposition}
\newtheorem*{prop*}{Proposition}
\newtheorem{conj}[thm]{Conjecture}
\theoremstyle{definition}
\newtheorem{defn}[thm]{Definition}
\newtheorem{notation}[thm]{Notation}
\newtheorem{remark}[thm]{Remark}
\newtheorem{problem}[thm]{Problem}
\def\e{\epsilon}
\def\la{\lambda}
\def\bb{\mathbb}
\def\om{\omega}
\def\de{\delta}
\def\bb{\mathbb}
\def\g{\gamma}
\def\cc{\mathcal}
\DeclareMathOperator{\tr}{tr}
\DeclareMathOperator{\Tr}{tr}
\DeclareMathOperator{\mal}{mal}
\DeclareMathOperator{\re}{Re}
\DeclareMathOperator{\im}{Im}
\DeclareMathOperator{\Var}{Var}
\newcommand\ip[2]{\left\langle #1\, , #2 \right\rangle}
\begin{document}


\title{Malnormal Matrices}

\author[Mulcahy]{Garrett Mulcahy}
\address{Department of Mathematics, University of Washington, Box 354350, Seattle, WA 98195-4350 }
\email{gmulcahy@uw.edu}

\author[Sinclair]{Thomas Sinclair}
\address{Mathematics Department, Purdue University, 150 N. University Street, West Lafayette, IN 47907-2067}
\email{tsincla@purdue.edu}
\urladdr{http://www.math.purdue.edu/~tsincla/}


\begin{abstract}
We exhibit an operator norm bounded, infinite sequence $\{A_n\}$ of $3n \times 3n$ complex matrices for which the commutator map $X\mapsto XA_n - A_nX$ is uniformly bounded below as an operator over the space of trace-zero self-adjoint matrices equipped with Hilbert--Schmidt norm. The construction is based on families of quantum expanders. We give several potential applications of these matrices to the study of quantum expanders. We formulate several natural conjectures and provide numerical evidence.
\end{abstract}

{\bf This article first appeared in the \emph{Proceedings of the American Mathematical Society} 150 (2022), no. 7, published by the American Mathematical Society.}\\

\setcounter{tocdepth}{1}
\maketitle


\section{Introduction} In \cite{VonNeumann1942}, von Neumann demonstrated the following surprising result on the existence of matrix contractions in arbitrarily large dimension which uniformly poorly commute with all self-adjoint contractions of trace zero. We recall a representative result from that paper:

\begin{thm*}[von Neumann, {\cite[Theorem 9.7]{VonNeumann1942}}] For every $\delta>0$ there is an $\e>0$ so that for any $n\in \bb N$ there is a contraction $A\in \bb M_n$ which satisfies  $
    \|[A,B]\|_2<n^{1/2}\e \Rightarrow \|B\|_2<n^{1/2}\delta$
where $B\in\bb M_n$ is any self-adjoint contraction of trace zero.
\end{thm*}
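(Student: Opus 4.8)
The plan is to deduce the theorem from a \emph{uniform} lower bound on the commutator map, obtained from quantum expanders. First I would pass from one matrix to two self-adjoint ones: write $A=H+iK$ with $H=H^{*}$, $K=K^{*}$, $\|H\|,\|K\|\le\tfrac12$ (so $A$ is a contraction). For self-adjoint $B$ one has $[A^{*},B]=-[A,B]^{*}$, hence $\|[H,B]\|_{2}=\tfrac12\|[A,B]-[A,B]^{*}\|_{2}\le\|[A,B]\|_{2}$ and likewise $\|[K,B]\|_{2}\le\|[A,B]\|_{2}$. So it suffices to build self-adjoint contractions $H,K\in\bb M_{n}$ for which $\|[H,B]\|_{2}$ and $\|[K,B]\|_{2}$ both being small forces $\|B\|_{2}$ small, for every self-adjoint trace-zero $B$, with all constants independent of $n$.

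Next I would set up the relevant spectral-gap inequality. Let $H$ have eigenspaces $V_{1},\dots,V_{p}$ with eigenvalues separated by at least $\rho_{H}>0$, and let $\Phi_{H}(B)=\sum_{a}P_{V_{a}}BP_{V_{a}}$ be the associated trace-preserving, $\|\cdot\|_{2}$-contractive block-diagonalization. Then $\|[H,B]\|_{2}^{2}=\sum_{a\ne b}|\lambda_{a}-\lambda_{b}|^{2}\|P_{V_{a}}BP_{V_{b}}\|_{2}^{2}\ge\rho_{H}^{2}\|B-\Phi_{H}(B)\|_{2}^{2}$, so a small commutator with $H$ says $B$ is nearly $\Phi_{H}$-invariant; similarly for $K$ and a map $\Phi_{K}$. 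Using that $\Phi_{H}$ is contractive, $\|B-\Phi_{H}\Phi_{K}(B)\|_{2}\le\|B-\Phi_{H}(B)\|_{2}+\|B-\Phi_{K}(B)\|_{2}$. If $H,K$ are chosen so that $\|\Phi_{H}\Phi_{K}\|\le1-\gamma$ on the subspace of trace-zero matrices for some $\gamma>0$ independent of $n$, then combining these gives $\gamma\|B\|_{2}\le(\rho_{H}^{-1}+\rho_{K}^{-1})\|[A,B]\|_{2}$, i.e.\ $\|[A,B]\|_{2}\ge c\|B\|_{2}$ with $c=\gamma/(\rho_{H}^{-1}+\rho_{K}^{-1})$. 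Choosing $\e:=c\,\de$, which depends only on $\de$, then yields the theorem: $\|[A,B]\|_{2}<n^{1/2}\e$ gives $\|B\|_{2}<n^{1/2}\de$.

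It remains to produce $H$ and $K$ realizing the uniform gap, and this is where quantum expanders enter. The map $\Phi_{H}$ is the average of $\Ad$ over the cyclic group generated by a unitary $D$ whose eigenspaces are those of $H$, and similarly $\Phi_{K}$ over $\langle D'\rangle$; hence $\Phi_{H}\Phi_{K}$ is a probability average of $\Ad_{g}$ over a finite generating subset of $\langle D,D'\rangle$, and the bound $\|\Phi_{H}\Phi_{K}\|\le1-\gamma$ on trace-zero matrices (whose only $\Ad$-invariants, the scalars, are excluded) is exactly the assertion that this data is a quantum expander. To keep $\rho_{H},\rho_{K}$ bounded away from $0$ I would take $D$ of order $2$ (so $H$ has eigenvalues $\pm\tfrac12$, $\rho_{H}=1$) and $D'$ of order $3$ (so $K$ has eigenvalues $\{-\tfrac12,0,\tfrac12\}$, $\rho_{K}=\tfrac12$), so that $\langle D,D'\rangle$ is a quotient of $\bb Z/2\ast\bb Z/3=\PSL_{2}(\bb Z)$.

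The main obstacle is precisely the existence of this gap uniformly in $n$: one needs, for every $n$, an involution $D$ and an order-$3$ unitary $D'$ on $\bb C^{n}$ (with roughly equidimensional eigenspaces) whose adjoint representation on trace-zero matrices has spectral gap bounded below independently of $n$. I would obtain this either probabilistically---taking $D,D'$ Haar-random in their respective finite-order conjugacy classes and invoking the spectral-gap estimates behind the quantum-expander constructions (Hastings, Ben-Aroya--Ta-Shma, and others), translated from $d$ Haar unitaries to this two-generator finite-order form---or from explicit congruence quotients of $\PSL_{2}(\bb Z)$ via Selberg's $3/16$ theorem, the latter needing an extra padding or tensoring step for the dimensions $n$ not of that special shape, which is harmless for the $n^{1/2}\de$ estimate since the defect block has dimension $o(n)$. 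Everything else is the elementary bookkeeping above; the content is entirely in the existence of the expander.
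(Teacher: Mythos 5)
The statement you are proving is not proved anywhere in the paper: it is quoted verbatim from von Neumann, whose own argument is probabilistic (``volumetric''), showing a random matrix works with positive probability. What the paper proves is its Theorem \ref{main} and the proposition in section 3.1, which gives the stronger uniform bound $\|[\cc X,\cc B]\|_2\ge c\,\|\dot{\cc B}\|_2$ but only in dimensions $3m$ with $m$ large. Your plan transplants the quantum-expander strategy onto von Neumann's original statement, which is a reasonable and genuinely different route. The elementary steps are sound: $\|[\re A,B]\|_2,\|[\im A,B]\|_2\le\|[A,B]\|_2$ for self-adjoint $B$ (this is the paper's Lemma \ref{lem:split-j}); the inequality $\|[H,B]\|_2\ge\rho_H\|B-\Phi_H(B)\|_2$; the triangle estimate $\|B-\Phi_H\Phi_K(B)\|_2\le\|B-\Phi_H(B)\|_2+\|B-\Phi_K(B)\|_2$ via contractivity of $\Phi_H$; and the conversion of a uniform lower bound on $B\mapsto[A,B]$ into von Neumann's $n^{1/2}\e\Rightarrow n^{1/2}\de$ form.

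The genuine gap is in the step you rightly flag as ``the content'': for every $n$, producing an involution $D$ and an order-$3$ unitary $D'$ on $\bb C^n$ with $\|\Phi_H\Phi_K:\cc H_n^0\to\cc H_n^0\|\le1-\gamma$, $\gamma$ independent of $n$. Neither of your two proposals is supported by what is cited or known. Hastings' result (Lemma \ref{lem:hastings}) concerns independent Haar-random unitaries, not unitaries constrained to fixed finite-order conjugacy classes; no ``translation'' to order-$2$ and order-$3$ random unitaries is given in the references the paper uses, and $\Phi_H\Phi_K=\frac16\sum_{i,j}\Ad_{D^iD'^j}$ is a \emph{lazy} average (it contains the identity term), so it is not literally an instance of $\cc E_U$ or $\cc E_U^h$. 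The Selberg/$\PSL_2(\bb Z)$ route requires the representation $\pi$ with $\pi(D),\pi(D')$ to be \emph{irreducible}: otherwise the commutant of $\pi(\Gamma)$ contains nonscalar elements, and these are fixed trace-zero matrices for $\Phi_H\Phi_K$, killing the gap. Irreducible congruence representations of $\PSL_2(\bb Z/p\bb Z)$ have dimensions $(p\pm1)/2,\,p,\,p\pm1$, far from covering all $n$, so padding becomes the main burden rather than a ``harmless'' epilogue. Note also that the paper lists constructing a malnormal sequence $A_n\in\bb M_n$ for \emph{every} $n$ as an open problem; your uniform $\gamma$ would essentially settle it, so that step cannot be treated as routine.

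For von Neumann's statement specifically there is a shorter path through what the paper actually establishes, and it uses the padding idea you anticipate but applied to the paper's own construction. Hastings gives the quantum $0.87$-expander pair $(U,V)\in\bb U(m)^2$ for all sufficiently large $m$, hence the contraction $\cc X\in\bb M_{3m}$ with $\mal(\cc X)\ge c$. For $n=3m+r$, $r\in\{0,1,2\}$, take $A=\cc X\oplus0_r$; for a self-adjoint trace-zero \emph{contraction} $B$, the block $(1,1)$-component of $[A,B]$ is $[\cc X,B_{11}]$, so $\|B_{11}-\tau(B_{11})I_{3m}\|_2\le c^{-1}\|[A,B]\|_2$, while the contraction and trace-zero constraints give $\|B_{12}\|_2,\|B_{21}\|_2,\|B_{22}\|_2\le\sqrt r$ and $|\tr B_{11}|=|\tr B_{22}|\le r$, so the remaining contributions to $\|B\|_2$ are $O(1)$. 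Choosing $\e<c\de/2$ and handling the finitely many small $n$ by compactness completes von Neumann's implication without ever invoking finite-order expanders. Your plan is correct in outline, but the finite-order-unitary ingredient is an unproven strengthening that the argument does not actually require.
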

Here, as throughout, $\bb M_n$ denotes the complex $n\times n$ matrices, $[A,B]$ denotes the commutator $AB-BA$ and $\|B\|_2$ denotes the Hilbert--Schmidt norm. This remarkable result has found several important applications such as to the theory of free probability \cite{Jung2007, Hayes2018} and to the model theory of II$_1$ factors \cite{FHS2014}. Moreover, von Neumann's interest in the ``finite, but very great'' \cite{VonNeumann1942} can be seen to anticipate the vibrant and rapidly developing field of asymptotic geometric analysis \cite{Milman2000, Vershik2007}.

Von Neumann's techniques are essentially probabilistic and are not constructive. He remarks \cite[Paragraph 11]{VonNeumann1942} that this is ``somewhat unsatisfactory,'' and that ``although our volumetric method seems to be quite powerful in securing existential results, it ought to be complemented by a more direct algebraic method, which names the resulting elements $A$ of $\bb M_n$ explicitly.'' The call for a constructive proof of von Neumann's result was more recently taken up by Vershik \cite[Remark 2]{Vershik2007}.

The goal of this note is to make progress in this direction. Our main result is the following variation on von Neumann's theorem:

\begin{thm}\label{main}
There is a universal constant $\g>0$ so that for infinitely many values $n\in\bb N$ there is a contraction $A\in \bb M_n$ satisfying
     $\|B\|_2\leq \g\|[A,B]\|_2$
where $B\in \bb M_n$ is any self-adjoint matrix of trace zero.
\end{thm}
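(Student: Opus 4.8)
I would prove this constructively using quantum expanders. Recall that there are absolute constants $\lambda<1$ and $d$ (we take $d=2$) so that for every $m$ there exist unitaries $Z_1,\dots,Z_d\in\bb M_m$ for which the quantum channel $\Phi(X)=\tfrac1d\sum_j Z_jXZ_j^{*}$ satisfies $\|\Phi(X)\|_2\le\lambda\|X\|_2$ for all $X\in\bb M_m$ with $\tr X=0$; two-generator examples with an absolute gap arise, for instance, from finite-dimensional irreducible representations of a property-(T) group on two generators, and one can in addition ensure the $Z_j$ are ``macroscopically nonscalar'', $\min_{\zeta\in\bb C}\|Z_j-\zeta I\|_2\ge c_0\sqrt m$. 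Fixing such a pair $Z_1,Z_2$, I would set $n=3m$ and take, on $\bb C^{m}\oplus\bb C^{m}\oplus\bb C^{m}$, the block matrix
\[
A=\frac1C\begin{pmatrix}0&I&I\\ Z_1&0&0\\ Z_2&0&0\end{pmatrix},
\]
possibly together with an auxiliary diagonal block $\tfrac1C\operatorname{diag}(0,I,2I)$ separating the two leaf coordinates, where $C$ is an absolute constant chosen so that $\|A\|\le1$.

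The claim is $\|[A,B]\|_2\ge\g^{-1}\|B\|_2$ for every trace-zero self-adjoint $B=(B_{ij})_{0\le i,j\le2}$, with $\g$ absolute; write $\Theta=\|[A,B]\|_2$. The heart is an elementary block computation showing that when $B$ is block diagonal,
\[
C^{2}\Theta^{2}=\|B_{11}-B_{00}\|_2^{2}+\|B_{22}-B_{00}\|_2^{2}+\|Z_1B_{00}Z_1^{*}-B_{11}\|_2^{2}+\|Z_2B_{00}Z_2^{*}-B_{22}\|_2^{2}.
\]
Hence if $\Theta$ is small then $B_{11},B_{22}\approx B_{00}$ and $B_{11}\approx Z_1B_{00}Z_1^{*}$, $B_{22}\approx Z_2B_{00}Z_2^{*}$, so that $\|B_{00}-\Phi(B_{00})\|_2\le\tfrac12\|Z_1B_{00}Z_1^{*}-B_{00}\|_2+\tfrac12\|Z_2B_{00}Z_2^{*}-B_{00}\|_2=O(C\Theta)$. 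Splitting $B_{00}=cI+B_{00}^{\circ}$ and using $\Phi(I)=I$, the spectral gap gives $\|B_{00}^{\circ}\|_2\le O(C\Theta)/(1-\lambda)$, while $\tr B=0$ combined with $B_{11},B_{22}\approx B_{00}$ yields $3mc=O(\sqrt m\,C\Theta)$, i.e. $\|cI\|_2=O(C\Theta)$; therefore $\|B_{00}\|_2$, and then $\|B_{11}\|_2$, $\|B_{22}\|_2$, are $O(C\Theta)$, so $\|B\|_2=O(C\Theta)$ with the constant depending only on $\lambda$ (and $c_0$).

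For a general $B$ I would reduce to the block-diagonal case by showing each off-diagonal block has norm $O(\Theta)$. Expanding $[A,B]$ blockwise and using unitarity of $Z_1,Z_2$, the $(1,1)$ block of $[A,B]$ gives $B_{01}^{*}\approx Z_1B_{01}$ and, after comparing the $(1,2)$ and $(2,1)$ blocks, also $B_{01}^{*}\approx Z_2B_{01}$; thus $B_{01}$ nearly commutes with both $Z_1$ and $Z_2$, so the gap of $\Phi$ controls its trace-zero part and macroscopic nonscalarness of $Z_1$ (via $B_{10}=B_{01}^{*}$) controls its scalar part, giving $\|B_{01}\|_2,\|B_{02}\|_2=O(\Theta)$; the last block $B_{12}$ is then controlled by the $(1,2)$ block once the diagonal term is present. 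Feeding these $O(\Theta)$ bounds into the block-diagonal analysis above gives $\|B\|_2=O(\Theta)$, and hence the theorem for the infinitely many $n$ of the form $3m$.

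I expect the crux to be this last reduction: one must bound \emph{all} off-diagonal blocks of $B$ by $O(\Theta)$ without circular reliance on the a priori large diagonal blocks, and then track how the resulting errors propagate through the block-diagonal analysis so that $\g$ remains universal. This is where the fine structure of $A$ enters --- the asymmetry between the ``$I$'' and ``$Z_j$'' arrows, macroscopic nonscalarness of the $Z_j$, and the auxiliary diagonal term if needed --- and it is also what rules out the obstruction that a normal matrix commutes with every trace-zero self-adjoint matrix diagonal in its eigenbasis. The auxiliary inputs (existence of two-generator quantum expanders with an absolute spectral gap, and the elementary block identities) are standard.
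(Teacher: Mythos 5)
Your high-level strategy is the same as the paper's: build a $3m\times 3m$ block matrix out of a pair of quantum-expander unitaries, show that $\|[A,B]\|_2$ small forces the off-diagonal blocks of $B$ to be small, then apply the expander's spectral gap to the diagonal blocks and use $\tr B=0$ to kill the residual scalar. Your block-diagonal analysis is correct, and your trace argument (Cauchy--Schwarz on $\tr B_{jj}$ to bound the scalar part of $B_{00}$) is essentially identical to the paper's. However, there is a genuine gap exactly where you flag it, and I do not think your specific choice of $A$ closes it.

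The problem is that your two requirements on $A$ pull against each other. Without the auxiliary diagonal block, the $(1,1)$, $(1,2)$, $(2,1)$, $(2,2)$ entries of $[A,B]$ give $Z_1B_{01}\approx B_{10}$, $Z_1B_{02}\approx B_{10}$, $Z_2B_{01}\approx B_{20}$, $Z_2B_{02}\approx B_{20}$, which does force $B_{01}\approx B_{02}$ and then $[Z_j,B_{01}]\approx 0$ for $j=1,2$; but then $B_{12}$ appears only through $B_{12}+B_{22}-B_{00}$ and through $Z_1B_{00}-B_{11}Z_1-B_{12}Z_2$, and these only relate $B_{12}$ to differences of diagonal blocks that are not yet controlled, so $B_{12}$ is not bounded. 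Adding the diagonal $\operatorname{diag}(0,I,2I)$ does put $B_{12}$ into the $(1,2)$ entry, but now that same entry reads $Z_1B_{02}-B_{10}-B_{12}$ and the $(2,1)$ entry reads $Z_2B_{01}+B_{21}-B_{20}$; the identity $Z_1B_{02}\approx B_{10}$ that you needed to conclude $B_{01}\approx B_{02}$ and then $[Z_2,B_{01}]\approx 0$ is no longer available. I do not see how to recover both facts simultaneously from your block structure, and you have not shown it either; "once the diagonal term is present" is precisely the step that breaks the earlier step. Moreover your argument requires an extra hypothesis (macroscopic nonscalarness of $Z_1$) to kill the scalar part of $B_{01}$, which is not needed in the paper.

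The paper's matrix is
\[
\cc X = \begin{pmatrix}  0 & 2 U & 0\\  0 & 0 & V\\  3 I & 2 I & I\end{pmatrix},
\]
and the coefficients $2,3,2,1$ are chosen so that the $(1,1)$, $(2,1)$, $(2,2)$ blocks of $[\cc X,\cc B]$ yield a closed chain $Y\approx 3Z^*V$, $Z^*\approx 2V^*X^*U+2V^*Z$, $X^*\approx \tfrac32U^*Y$ whose composition gives $Y\approx 9V^*U^*YUV+6V^*ZV$; since $\|9V^*U^*YUV\|_2=9\|Y\|_2$ while $\|Y\|_2=\|Y\|_2$, the reverse triangle inequality forces $8\|Y\|_2\le 6\|Z\|_2+O(\e)$, which together with $3\|Z\|_2\le \|Y\|_2+\e$ makes all three off-diagonal norms $O(\e)$ directly, with no appeal to the expander property and no side hypothesis on the unitaries. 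Only after the off-diagonals are killed is the spectral gap invoked, and only for the diagonal blocks. So the asymmetry and the particular scalar weights in $\cc X$ are not decorative: they are what makes the off-diagonal reduction a genuine contraction argument rather than a circular system. If you want to push your more symmetric matrix through, you would need to exhibit an analogous non-circular chain, and as written you have not.

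Finally, two small points: quantum $\delta$-expander pairs with a uniform gap are supplied by Hastings' theorem (random unitaries), and that source automatically gives $\min_\zeta\|Z_j-\zeta I\|_2\asymp\sqrt m$ with high probability, so the macroscopic-nonscalarness hypothesis is not unreasonable to add; the normalization $\|A\|\le 1$ by an absolute constant $C$ is harmless and the paper does the same implicitly ($\|\cc X\|\le 9$).
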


We call a sequence of matrices satisfying the conclusion of Theorem \ref{main} \emph{malnormal}, and refer the reader to section \ref{sec:malnormal} below for a fuller treatment of this concept. The advantages to our approach are chiefly twofold. First, the dependency of $\e$ on $\delta$ in von Neumann's result, though explicit, is difficult to work out and does not appear to be linear, or even low-degree polynomial, while the dependency in our result is explicitly linear. Second, the proof of our result constructs a malnormal matrix given an input pair $U,V\in \bb U(n)$ of unitaries which form a so-called ``quantum expander.'' The existence of such pairs with uniform expansion constant for all $n$ suitably large is guaranteed by a result of Hastings \cite{Hastings2007} (see Lemma \ref{lem:hastings}), though this result is again non-constructive. This will be the starting point of our result, so the malnormal matrices are ultimately produced non-constructively. However, work of Ben-Aroya, Schwartz, and Ta-Shma \cite[Theorems 4.3 and 4.4]{BenAroya} and Harrow \cite{Harrow2008} shows how to construct explicit infinite families of uniform quantum expanders in $\bb U(n_k)^{D}$ with $n_k\to \infty$ given a suitable (and explicit) ``seed'' expander $U\in \bb U(n_0)^{d}$ with $D = d^2\geq 4$. We believe that our construction can be adapted to produce a malnormal matrix from any $d$-tuple of unitaries which form a quantum expander for $d\geq 2$ arbitrary, but we leave this as an open problem. Our result has, in fact, several connections with the theory of quantum expanders as developed in \cite{BenAroya2007, BenAroya, Hastings2007, Pisier2014}. As will be shown in section \ref{sec:q-expander} below, the malnormality of the matrix $(\re U + i\im V)/2$ implies that the pair $U,V\in \bb U(n)$ forms a ``quantum edge expander,'' a weaker notion than the aforementioned quantum expander. We do not know if the converse is true; see Remark \ref{mal-converse}. 

In light of this we investigate the following conjecture for the case of Haar random orthogonal matrices via numerical methods in section \ref{sec:experimental} and provide some positive evidence.

\begin{conj}\label{main-conj}
Let $U,V\in \bb U(n)$ be independently chosen Haar random unitaries and consider the random contraction $J := \frac{1}{2}\bigl(\re U  + i \im V\bigr).$ There is a universal constant $\gamma'>0$ so that with probability approaching $1$ as $n\to\infty$ the matrix $J$ satisfies
    $\|B\|_2\leq \gamma'\|[J,B]\|_2$
where $B\in \bb M_n$ is any self-adjoint matrix of trace zero.
\end{conj}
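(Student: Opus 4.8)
Write $J=\tfrac12(S+iT)$ with $S=\re U$ and $T=\im V$ self-adjoint. For self-adjoint $B$ the matrix $[S,B]$ is skew-adjoint while $i[T,B]$ is self-adjoint, and these two subspaces are orthogonal for the real Hilbert--Schmidt inner product; hence
\[
\|[J,B]\|_2^2=\tfrac14\bigl(\|[S,B]\|_2^2+\|[T,B]\|_2^2\bigr)=\langle\cc T B,B\rangle,\qquad \cc T:=\tfrac14\bigl(\ad_S^{\,2}+\ad_T^{\,2}\bigr),
\]
where $\ad_S(X)=[S,X]$ is self-adjoint on $\bb M_n$ because $S$ is, so that $\cc T$ is a positive operator preserving the real $(n^2-1)$-dimensional space $\bb M_n^{0}$ of trace-zero self-adjoint matrices. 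Thus $J$ is $\gamma'$-malnormal exactly when $\cc T\succeq \gamma'^{-2}\,\mathrm{id}$ on $\bb M_n^{0}$, and Conjecture~\ref{main-conj} is equivalent to a high-probability lower bound on $\lambda_{\min}(\cc T)$ that is uniform in $n$. Two routine steps set the stage. First, expanding $\ad_S^{\,2}$ in left and right multiplications by $U,U^\ast$ and averaging with the order-one Weingarten formula for Haar measure on $\bb U(n)$ (using $\E[U\otimes U]=0$ and $\E[U\,\cdot\,U^\ast]=n^{-1}\tr(\,\cdot\,)\,I$) gives $\E[\ad_S^{\,2}]=\mathrm{id}=\E[\ad_T^{\,2}]$ on $\bb M_n^{0}$, so $\E[\cc T]=\tfrac12\,\mathrm{id}$. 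Second, $(U,V)\mapsto\lambda_{\min}(\cc T)$ is $O(1)$-Lipschitz for the Hilbert--Schmidt metric on $\bb U(n)^2$ (since $U\mapsto\ad_{\re U}$ is $2$-Lipschitz and $\|\ad_{\re U}\|\le 2$), so by concentration of measure on the unitary group $\lambda_{\min}(\cc T)\ge\E\,\lambda_{\min}(\cc T)-o(1)$ with probability $1-o(1)$. The conjecture therefore reduces to the assertion $\liminf_{n}\E\,\lambda_{\min}(\cc T)>0$.

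That assertion does \emph{not} follow from $\E[\cc T]=\tfrac12\,\mathrm{id}$, which only controls the bulk of the spectrum. Indeed the eigenvalues of $\ad_S$ are the differences $\cos\theta_j-\cos\theta_k$ of the $\cos\theta_\ell$ (the $e^{i\theta_\ell}$ being the eigenvalues of $U$, so the $\cos\theta_\ell$ are approximately arcsine-distributed), and these differences concentrate near $0$: the number of eigenvalues of $\ad_S^{\,2}$ below $\delta$ is of order $n^2\sqrt{\delta}$ up to a logarithmic factor, and likewise for $\ad_T^{\,2}$. A unit $B$ with $\langle\cc TB,B\rangle\le\eta$ must therefore lie within $\sqrt{\eta/\delta}$ of the span of the eigenvectors of $\ad_S^{\,2}$ with eigenvalue $\le\delta$ \emph{and} within $\sqrt{\eta/\delta}$ of the analogous span for $\ad_T^{\,2}$; since $U$ and $V$ are independent these are two independent subspaces, each of dimension only a small fraction of $n^2$ once $\delta$ is below an absolute constant. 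The required bound is thus a quantitative \emph{transversality} statement: for some absolute $\delta_0>0$, the low-energy subspaces $\operatorname{Ran}\mathbf 1_{[0,\delta_0]}(\ad_S^{\,2})$ and $\operatorname{Ran}\mathbf 1_{[0,\delta_0]}(\ad_T^{\,2})$ have all principal angles bounded away from $0$ with high probability, which would give $\lambda_{\min}(\cc T)\gtrsim\delta_0$ and hence the conjecture with an absolute $\gamma'$.

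Conceptually this transversality is the finite-dimensional shadow of a spectral-gap statement in the limit. As $n\to\infty$ the pair $(\re U,\im V)$ converges in $\ast$-distribution to $(s,t)$, where $s$ and $t$ are free and each has the arcsine law on $[-1,1]$, and $\cc T$ converges to $\tfrac14(\ad_s^{\,2}+\ad_t^{\,2})$ acting on $L^2(\cc N)$, $\cc N:=W^\ast(s,t)\cong L^\infty[-1,1]\ast L^\infty[-1,1]\cong L(\bb F_2)$. This operator annihilates the constants and nothing else, so in the limit the conjecture amounts to $\tfrac14(\ad_s^{\,2}+\ad_t^{\,2})$ having a spectral gap above $0$ on $L^2(\cc N)\ominus\bb C\hat 1$ --- a statement closely related to, though formally an $L^2$-vector version of and hence a priori stronger than, the fact that $L(\bb F_2)$ does not have property $\Gamma$. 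The plan, then, is to establish this limiting gap and to transfer it to finite $n$, the latter requiring a \emph{strong} convergence theorem for the two-sided (left- and right-multiplication) representation of $\{\re U,\im V\}$ on $\bb M_n$, i.e.\ for the family $\{\re U\otimes 1,\ 1\otimes\overline{\re U},\ \im V\otimes 1,\ 1\otimes\overline{\im V}\}$, so as to rule out outlier eigenvalues of $\cc T$ near $0$. I expect the combination of these two ingredients --- the limiting $L^2$ spectral gap and its strong-convergence transfer --- to be the main obstacle: such strong convergence for conjugation-type representations is considerably more delicate than strong convergence of $\{U,V\}$ themselves, and at finite $n$ the transversality reformulation makes the difficulty concrete, since the two near-kernels, although independent, are far from Haar-random points of the relevant Grassmannian.
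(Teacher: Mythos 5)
This statement is a \emph{conjecture} in the paper, not a theorem: the authors offer only numerical evidence for it (Section 4) and explicitly leave it open (Remark \ref{mal-converse} says the converse direction, closely related, also eluded them). So there is no ``paper's own proof'' to compare you against. You have correctly treated this as an open problem and offered a sketch of attack rather than a proof, and you say so; that framing is honest.

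On its own terms, your sketch is sound as far as it goes. The splitting $\|[J,B]\|_2^2 = \tfrac14(\|[\re U,B]\|_2^2+\|[\im V,B]\|_2^2)$ is exactly the paper's Lemma \ref{lem:split-j}; the reformulation as $\lambda_{\min}(\cc T)\ge\gamma'^{-2}$ on $\cc H_n^0$ is correct; the Weingarten-level computation $\E[\ad_{\re U}^2]=\E[\ad_{\im V}^2]=\id$ on the trace-zero Hermitians is correct (one uses $\E[UXU^*]=\tau(X)I$ and $\E[UXU]=\E[U^2]=0$ term by term, and similarly for $V$), giving $\E[\cc T]=\tfrac12\id$; and the $O(1)$-Lipschitz concentration reduction to $\liminf_n\E\,\lambda_{\min}(\cc T)>0$ is legitimate. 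You are also right that the mean says nothing about $\lambda_{\min}$ because the spectrum of $\ad_{\re U}$ is the set of differences $\cos\theta_j-\cos\theta_k$, which has a divergent density at $0$ in the arcsine limit, so the near-kernel of each $\ad^2$ has dimension $\gg n^2\sqrt\delta$ (indeed with a $\log(1/\delta)$ correction). The ``transversality of near-kernels'' reformulation and the free-probability limit --- spectral gap of $\tfrac14(\ad_s^2+\ad_t^2)$ on $L^2(L(\bb F_2))\ominus\bb C$, plus a strong-convergence transfer for the two-sided representation $\{\re U\otimes 1,\,1\otimes\overline{\re U},\,\im V\otimes 1,\,1\otimes\overline{\im V}\}$ --- are both correctly identified as the genuine remaining difficulties, and you are right that an $L^2$-vector spectral gap is formally stronger than non-$\Gamma$ (or even fullness, which concerns operator-norm-bounded sequences) for $L(\bb F_2)$. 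In short: your proposal is a plausible and well-reasoned research program, not a proof, which is the correct status for a statement the paper itself only conjectures. If you pursue it, the two load-bearing unresolved steps are precisely the two you flag: establishing the limiting $L^2$ spectral gap, and ruling out outlier low eigenvalues of $\cc T$ at finite $n$ via strong convergence of the conjugation representation.
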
 

The computations also suggest a positive answer to \cite[Conjecture 1.1]{Vershik2007} though the method of choosing random contractions differs from ours. There are, in fact, various ways one could go about sampling random contractions. The most straightforward would be to use the uniform (Lebesgue) measure on the set of contractions in $\bb M_n$ realized in $\bb R^{2n^2}$, but this is difficult to work with in practice. Indeed, von Neumann would have liked to work directly with such random contractions \cite[Paragraph 11]{VonNeumann1942}. He instead resorts to considering the more tractable uniform measure on the $\|\cdot\|_2$-norm unit ball, deploying his formidable analytic skill to deal with the fact that such matrices can have arbitrarily large singular values as $n\to \infty$. The matrix $J$ as above is a guaranteed contraction as it is randomly chosen under the push-forward of the Haar probability measure on $\bb U(n)^2$ under the map $(U,V)\mapsto (U+U^*+V-V^*)/4$. It is easy to simulate Haar random unitaries, as this requires only a means of choosing an $n\times n$ array $Y$ of independent complex gaussian random variables. The unitary in the polar decomposition of $Y = U|Y|$ is Haar distributed \cite[Appendix A]{Pisier2014}. In the last section, devoted to open problems, we consider using the Ginibre ensemble, which does not produce a random contraction, but does produce a matrix with largest singular value at most $2$ with probability tending to $1$ as $n\to\infty$; this follows from \cite[Theorem 2.11]{DavidsonSzarek2003}, for instance.


\section{Preliminaries}\label{sec:prelim}

    For $A = (A_{ij})\in \bb M_n$ we denote the usual matrix trace by $\tr$ and the normalized trace of $A$ as $\tau(A)$, given by $\tau(A) := \frac{1}{n}\sum_i A_{ii}.$
    Note that $\tau(I_n) =1$, where $I_n$ is the identity matrix.
The \emph{Hilbert--Schmidt norm} of a matrix $A\in \bb M_n$ is defined as
    \begin{equation*}
        \|A\|_2 := \tr(A^*A)^{1/2} = \left(\sum_{i,j} |A_{ij}|^2\right)^{1/2}.
    \end{equation*}
This is the norm on $\bb M_n$ corresponding to the inner product $\ip{A}{B} := \tr(B^*A) = \sum_{i,j} A_{ij}\overline{B_{ij}}$. Note that $\|A\|_2 = \|A^*\|_2$. It is easy to see from the definition and unitary invariance of the trace that for $A\in \bb M_n$ and $U,V\in \bb U(n)$ we have that
\begin{equation}\label{unitary-invariance}
    \|UAV\|_2 = \|A\|_2.
\end{equation} 
It is a standard fact for $A,B\in \bb M_n$ that $\|AB\|_2\leq \|A\|\|B\|_2$ where $\|A\|$ is the operator norm, i.e., the largest singular value of $A$ \cite[Proposition IV.2.4]{Bhatia1997}.


Let $U = (U_1,\dotsc,U_k)\in \bb U(n)^k$ be a $k$-tuple of unitaries and define 
\begin{equation}
    \cc E_U(X) := \frac{1}{k}\sum_{i=1}^k U_i^*XU_i\quad,\quad \cc E_U^\dagger(X) := \frac{1}{k}\sum_{i=1}^k U_iXU_i^*\quad,\quad \cc E_U^h := \frac{1}{2}(\cc E_U + \cc E_U^\dagger)
\end{equation}
which are trace-preserving, unital, completely positive maps. It is easy to check that
    $\tr(\cc E_U(X)Y) = \tr(\cc E_U^\dagger(Y)X)$
for all $X,Y\in\bb M_n$; hence,
\begin{equation}\label{Eh-equality}
    \tr(\cc E_U(B) B) = \tr(\cc E_U^h(B)B) = \tr(\cc E_U^\dagger(B)B)
\end{equation} for all $B$ self-adjoint. 

\begin{notation} For a matrix $B$, let $\dot B := B - \tau(B)I$. In particular $\tr(\dot B) =0$ and $\|[X,B]\|_2 = \|[X,\dot B]\|_2$. Let $\cc H_n$ (respectively, $\cc H_n^0$) be the subspace of $n\times n$ self-adjoint matrices (resp., self-adjoint matrices of trace zero) equipped with the Hilbert-Schmidt norm.
\end{notation}

We recall the definitions of a quantum expander and a quantum edge expander due to Hastings \cite{Hastings2007}.

\begin{defn}\label{defn:q-expander}
 We will say that $U =(U_1,\dotsc,U_k)$, a $k$-tuple of unitaries, is a \emph{quantum $\de$-edge expander} for some $0\leq\delta <1$ if we have that $\cc E_U$ satisfies
\begin{equation}\label{edge-expander-1}
    \tr(\cc E_U(\dot B)\dot B)\leq \delta\tr(\dot B^2)
\end{equation}
for all $B$ self-adjoint. Likewise, we say that $U$ is a \emph{quantum $\de$-expander} if 
\begin{equation}\label{q-expander-1}
    \|\cc E_U: \cc H_n^0\to \cc H_n^0\|\leq \de
\end{equation}
\end{defn} 

Equivalently, we have that $U$ is a quantum $\delta$-edge expander if and only if
\begin{equation}\label{edge-expander-1-1}
    \tau(\cc E_U(B)B)\leq \delta\, \tau(B^2) + (1-\delta)\,\tau(B)^2
\end{equation}
for all $B$ self-adjoint. Using the identity
\begin{equation}\label{commute-identity}
    \frac{1}{k}\sum_{i=1}^k\|[U_i,B]\|_2^2 = 2\|\dot B\|_2^2 -2\tr(\cc E_U(\dot B)\dot B)
\end{equation}
the conditions (\ref{edge-expander-1}) and (\ref{edge-expander-1-1}) are again equivalent to 
\begin{equation}
     \frac{1}{k}\sum_{i=1}^k\|[U_i,B]\|_2^2\geq 2(1-\delta)\|\dot B\|_2^2
\end{equation}
for all $B$ self-adjoint.

    In Hastings' original formulation \cite[Appendix]{Hastings2007} a quantum $\la$-edge expander $U$ is a tuple of unitaries from $\bb U(n)$ so that 
    \begin{equation}\label{edge-expander-3}
        \tr(\cc E_U(P)P)\leq \la\, \tr(P)
    \end{equation}
    for all projections $P\in \bb M_n$ of rank at most $n/2$. Using (\ref{edge-expander-1-1}) it follows that a quantum $\de$-edge expander satisfies (\ref{edge-expander-3}) with $\la\leq (1+\de)/2$. Conversely, by \cite[(A11)]{Hastings2007} it follows that the constant $\de$ in (\ref{edge-expander-1}) can be chosen to satisfy $1-\la \leq \sqrt{2(1 - \de)}$.

\begin{remark}
    Note that $\|\cc E_U^h: \cc H_n^0\to \cc H_n^0\|\leq \|\cc E_U: \cc H_n^0\to \cc H_n^0\|$. Since $\cc E_U^h:\cc H_n^0\to \cc H_n^0$ is self-adjoint, we have that \[\|\cc E_U^h: \cc H_n^0\to \cc H_n^0\| = \sup_{\|B\|_2=1, B\in \cc H_n^0} |\tr(\cc E_U^h(B)B)|\geq^{eq. (\ref{Eh-equality})} \sup_{\|B\|_2=1, B\in \cc H_n^0} \tr(\cc E_U(B)B).\] Thus $U$ being a quantum $\de$-edge expander is weaker than $U$ being a quantum $\de$-expander.
\end{remark}

The following result is due to Hastings \cite{Hastings2007} (cf.~\cite[Lemma 1.8 and Theorem 4.2]{Pisier2014}).

\begin{lem}\label{lem:hastings}
Let $U = (U_1,\dotsc, U_k)\in \bb U(n)^k$ be a $k$-tuple of unitaries sampled according to Haar measure for some $k\geq 2$. For all $\e>0$ we have that 
\[\|\cc E_U^h : \cc H_n^0\to \cc H_n^0\|\leq \frac{\sqrt{2k-1}}{k} + \e\]
with probability tending to $1$ as $n\to \infty$.
\end{lem}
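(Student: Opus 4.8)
The plan is to deduce the bound on $\|\cc E_U^h : \cc H_n^0\to \cc H_n^0\|$ from the known spectral behavior of random unitaries acting on $\cc H_n^0$ by conjugation, which is an instance of the asymptotic freeness phenomenon. First I would observe that the operator $\cc E_U^h = \frac{1}{2k}\sum_{i=1}^k (\Ad U_i + \Ad U_i^*)$ acting on $\cc H_n^0$ is, up to the standard identification, closely related to the operator $\frac{1}{2k}\sum_{i=1}^k (\la(g_i) + \la(g_i^{-1}))$ on the free group $F_k$: the $\Ad U_i$ generate a representation of $F_k$ on $\cc H_n^0$, and as $n\to\infty$ the family $(\Ad U_1,\dots,\Ad U_k)$ of Haar-random conjugations on the $(n^2-1)$-dimensional space $\cc H_n^0$ converges (in the relevant sense controlling operator norms) to the left regular representation. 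The operator norm of $\frac{1}{2k}\sum_{i=1}^k(\la(g_i)+\la(g_i^{-1}))$ on $\ell^2(F_k)$ is exactly $\frac{\sqrt{2k-1}}{k}$ by Kesten's theorem, since the $2k$-regular tree is the Cayley graph of $F_k$ with these generators. The $+\e$ accounts for the fact that for finite $n$ one only has approximate freeness.

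The key steps, in order, are: (1) set up the representation of $F_k$ on $\cc H_n^0$ via $g_i\mapsto \Ad U_i$, noting this is a unitary representation for the Hilbert--Schmidt inner product since conjugation by a unitary preserves $\ip{\cdot}{\cdot}$ and preserves $\cc H_n^0$; (2) invoke strong asymptotic freeness for Haar-random unitaries — by the results of Collins--Male (building on Haagerup--Thorbjørnsen), the tuple $(U_1,\dots,U_k)$ of Haar unitaries is almost surely strongly asymptotically free, and hence the conjugation representation on $\cc H_n^0$ has no almost-invariant vectors beyond what the regular representation forces, so $\limsup_n \|\cc E_U^h : \cc H_n^0 \to \cc H_n^0\| \leq \|\frac{1}{2k}\sum(\la(g_i)+\la(g_i^{-1}))\|$ almost surely; (3) compute the latter norm as $\frac{\sqrt{2k-1}}{k}$ via Kesten/Akemann--Ostrand; (4) convert the almost-sure $\limsup$ statement into the ``with probability tending to $1$'' statement with an extra $\e$, which is routine. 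Alternatively, and perhaps closer to Hastings' original argument, one can bypass strong asymptotic freeness and instead directly estimate, for a fixed self-adjoint $B\in\cc H_n^0$ with $\|B\|_2=1$, the quantity $\E[\tr(\cc E_U^h(B)B)^m]$ or the trace of a high power of $\cc E_U^h$ by a moment/graph-counting argument: expanding $\tr((\cc E_U^h)^m)$ produces a sum over words in the $U_i^{\pm 1}$ whose expectations are controlled by Weingarten calculus, and the dominant contribution comes from ``tree-like'' cancellation patterns, reproducing the $(2k-1)$-growth of the free group.

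The main obstacle is step (2)/(4): making the passage from a fixed test matrix $B$ to a uniform bound over the whole unit sphere of the $(n^2-1)$-dimensional space $\cc H_n^0$ while keeping the failure probability $o(1)$. A union bound over an $\e$-net of the sphere has exponentially many points in $n^2$, so one needs the concentration of $\tr(\cc E_U^h(B)B)$ (or of the top eigenvalue) to beat that — this is exactly where one uses measure concentration on $\bb U(n)^k$ (the operator $U\mapsto \|\cc E_U^h\|$ is Lipschitz in the appropriate metric, so Gromov--Milman / Gromov's concentration on the unitary group gives Gaussian tails at scale $\e$ with exponent of order $n$, which for the top-eigenvalue formulation suffices), or one works directly with the spectral-radius/moment method which automatically handles all of $\cc H_n^0$ at once. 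I would present the argument via the moment method combined with a norm-versus-moments comparison, citing \cite{Hastings2007} and \cite{Pisier2014} for the detailed Weingarten estimates, since those references carry out precisely this computation and the statement here is quoted from them.
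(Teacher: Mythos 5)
First, be aware that the paper does not prove this lemma: the sentence immediately preceding it reads ``The following result is due to Hastings \cite{Hastings2007} (cf.~\cite[Lemma 1.8 and Theorem 4.2]{Pisier2014}),'' and no argument is given. So there is no in-paper proof to compare against; I evaluate your sketch against what the cited sources actually do. Your alternative route --- the moment/Weingarten computation estimating $\E[\tr((\cc E_U^h)^m)]$ and passing from high moments to the top eigenvalue --- is indeed a faithful sketch of the Hastings/Pisier argument, and you are right that it sidesteps the $\e$-net versus concentration tension because a moment bound on the spectral radius automatically controls the worst-case test matrix in $\cc H_n^0$. Your Kesten computation is also correct: the $2k$-regular tree has spectral radius $2\sqrt{2k-1}$, so the averaged operator over the symmetric generating set of $F_k$ has norm $2\sqrt{2k-1}/(2k) = \sqrt{2k-1}/k$, which is the right comparison for $\cc E_U^h$.

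Your primary route, however, has a genuine gap at step (2). Identifying $X \in \bb M_n$ with the vector $\sum_{i,j} X_{ij}\, e_i \otimes e_j \in \bb C^n \otimes \bb C^n$, the conjugation $\Ad U_i$ becomes $U_i \otimes \overline{U_i}$, and all $k$ of these operators fix the vector corresponding to $I_n$. The Collins--Male / Haagerup--Thorbj\o rnsen strong asymptotic freeness theorem controls operator norms of noncommutative polynomials in $U_1,\dotsc,U_k$ (and auxiliary deterministic matrices) inside $\bb M_n$; it does not directly control norms of polynomials in the $n^2 \times n^2$ matrices $U_i \otimes \overline{U_i}$. The pair $(U_i, \overline{U_i})$ is very far from free --- the common fixed vector is a witness --- and the assertion that, once you restrict to the orthocomplement of that vector (which contains $\cc H_n^0$), the $U_i \otimes \overline{U_i}$ behave like strongly free Haar unitaries is precisely the content of the Hastings/Pisier theorem, not a corollary of strong freeness of the $U_i$'s themselves. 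As written, step (2) assumes what is to be proved. (There is a substantially later and harder theorem, due to Bordenave and Collins, establishing strong asymptotic freeness for tensor-product representations such as $U \otimes \bar U$, which would legitimately repair step (2); but that is not what you cite, and it postdates both references.) Presenting the moment-method route as the proof and dropping, or carefully reformulating, the freeness shortcut would give a sound proposal.
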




\section{Malnormal Matrices}\label{sec:malnormal}


\begin{defn}
A matrix $X\in \bb M_n$ is \emph{$\kappa$-malnormal} if 
\begin{equation}
     \|[X,B]\|_2\geq \kappa \|B - \tau(B)I\|_2
\end{equation}
for all self-adjoint matrices $B\in \cc H_n$. We define $\mal(X)$ to be the largest constant $\kappa$ for which $X$ is $\kappa$-malnormal, and we simply say that $X$ is \emph{malnormal} if $\mal(X)>0$.
\end{defn}

Since $\|[X,B]\|_2 = \|[X,\dot B]\|_2$, $X$ being $\kappa$-malnormal is equivalent to the linear operator $B\mapsto [X,B]$ being $\kappa$-bounded from below on $\cc H_n^0$ equipped with the Hilbert-Schmidt norm. Moreover, by compactness of the unit ball in $\cc H_n^0$, $\mal(X)$ is attained; hence, $\mal(X)=0$ if and only if $X$ commutes with a non-scalar self-adoint matrix if and only if $X$ and $X^*$ commute with a common non-scalar matrix. The term `malnormal' is borrowed from group theory where a subgroup $H$ of a group $G$ is said to be malnormal if $gHg^{-1} \cap H = \{1\}$ for all $g\in G\setminus H$ \cite[p. 203]{Lyndon2001}. Similary, $X$ being malnormal is a strong negation of $X$ being normal as any normal matrix commutes with its real and imaginary parts.

As we will see below, malnormality is really intended as a quantitative concept, that is, the \emph{size} of the constant $\mal(X)$ is more pertinent than its mere existence. More precisely, there seem to be many explicit constructions of families of contractions $X_n\in \bb M_n$ for which $\mal(X_n)^{-1} = O(\sqrt{n})$ (for instance, the shift matrices $(S_n)_{i,j} = \delta_{i,j-1}$), while it seems to be a nontrivial task to produce examples which even satisfy $\mal(X_n)^{-1} = o(\sqrt n)$. Thus this property seemingly captures some genuinely new phenomenon about the matrix when $\mal(X)^{-1}\ll \sqrt n\|X\|$.

\subsection{Proof of the Main Result}
\begin{prop}
There exists a constant $c>0$ so that for each $n\in\bb N$ sufficiently large, there is a contraction $X_n\in \bb M_{3n}$ satisfying $\mal(X_n)\geq c$.
\end{prop}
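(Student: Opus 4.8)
The plan is to build $X_n$ out of a quantum expander. By Lemma~\ref{lem:hastings}, for $n$ large there is a pair $U,V\in\bb U(n)$ with $\|\cc E_{(U,V)}^h:\cc H_n^0\to\cc H_n^0\|\leq \de$ for some fixed $\de<1$ (take $k=2$, so the bound is near $\sqrt3/2<1$). I expect the right candidate to be a $3\times 3$ block matrix on $\bb C^n\oplus\bb C^n\oplus\bb C^n$ that encodes left multiplication by $U$ and by $V$ on separate ``legs,'' something like
\begin{equation*}
    X_n = \frac{1}{\sqrt 3}\begin{pmatrix} 0 & 0 & 0 \\ U & 0 & 0 \\ V & 0 & 0\end{pmatrix},
\end{equation*}
or a symmetrized variant; the normalization $\tfrac{1}{\sqrt3}$ (or a similar constant) is chosen to make $\|X_n\|\leq 1$, which is immediate since the single nonzero block column has operator norm $\tfrac{1}{\sqrt3}\|(U^*\ V^*)^*\|=\sqrt{2/3}<1$ after accounting for the two stacked isometries. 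The point of the block form is that conjugating a test matrix $B=(B_{ij})_{i,j=1}^3$ by $X_n$ shuffles its blocks through $U$ and $V$, so that $\|[X_n,B]\|_2^2$ will, after expansion, contain the quantities $\|[U,\cdot]\|_2^2$ and $\|[V,\cdot]\|_2^2$ that the expander hypothesis controls via \eqref{commute-identity}.

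The key steps, in order: (1) verify $\|X_n\|\leq 1$. (2) Given self-adjoint $B\in\cc H_{3n}$ with $\tau(B)=0$, write $B$ in $3\times3$ block form with blocks $B_{ij}\in\bb M_n$, $B_{ij}^*=B_{ji}$, and compute $[X_n,B]$ blockwise. (3) Expand $\|[X_n,B]\|_2^2=\sum_{ij}\|([X_n,B])_{ij}\|_2^2$ and collect terms. I anticipate the cross terms and the diagonal-block terms organize into (a) ``expander'' contributions of the form $\sum_i \|[U,B_{ii}]\|_2^2$-type expressions bounded below using $\tr(\cc E^h(\dot B_{ii})\dot B_{ii})\leq\de\|\dot B_{ii}\|_2^2$, plus (b) ``off-diagonal mixing'' contributions that are bounded below by a fixed multiple of $\sum_{i\neq j}\|B_{ij}\|_2^2$ simply because $U,V$ are unitaries and the shift structure moves off-diagonal blocks to distinct locations with no cancellation. (4) Combine (a) and (b): the expander bound gives control of $\sum_i\|\dot B_{ii}\|_2^2$ but loses the scalar parts $\tau(B_{ii})$; however, $\sum_i\tau(B_{ii})=3\tau(B)=0$, so the vector $(\tau(B_{11}),\tau(B_{22}),\tau(B_{33}))$ lies in the trace-zero subspace of $\bb C^3$, and I would recover its norm from the off-diagonal mixing or from a residual diagonal term (the scalars $\tau(B_{ii})$ genuinely move under the shift since $U\cdot(\tau(B_{11})I)$ sits in a different block than $\tau(B_{22})I$). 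Assembling, $\|[X_n,B]\|_2^2\geq c^2\bigl(\sum_{ij}\|B_{ij}\|_2^2\bigr)=c^2\|B\|_2^2$ for a universal $c>0$, which is the claim.

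The main obstacle is step~(4): making sure that \emph{every} component of $B$ — the non-scalar parts of the diagonal blocks, the scalar parts of the diagonal blocks (constrained only to sum to zero), and all off-diagonal blocks — is simultaneously bounded below by a single commutator expression, with no direction in $\cc H_{3n}^0$ escaping. The expander hypothesis is exactly what handles the non-scalar diagonal parts, but it says nothing about scalars, so the block/shift geometry has to be arranged so that the trace-zero constraint $\sum_i\tau(B_{ii})=0$ is enough to kill the remaining freedom; getting a clean, dimension-independent constant here (rather than something that degrades as the blocks interact) is the delicate point, and may be why the construction needs three blocks rather than two. A secondary technical nuisance is bookkeeping the cross terms in $\|[X_n,B]\|_2^2$ so that the ``good'' lower-bound terms are not swamped by indefinite cross terms; I would handle this by choosing the symmetrized/Hermitian version of $X_n$ so that $\re U$ and $\im V$ appear (matching the $J=\tfrac12(\re U+i\im V)$ heuristic in the introduction), which should make the relevant quadratic form manifestly a sum of the controlled pieces.
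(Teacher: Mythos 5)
Your broad plan --- construct $X_n\in\bb M_{3n}$ as a block matrix built from a quantum $\delta$-expander pair $(U,V)$, use the block geometry to pin down off-diagonal blocks of a test matrix $\cc B$, and use the expander property to control the trace-free parts of the diagonal blocks --- is indeed the paper's strategy. But the specific matrix you propose has $\mal(X_n)=0$ for \emph{every} choice of $U,V$: for
\[
X_n=\frac{1}{\sqrt 3}\begin{pmatrix}0&0&0\\U&0&0\\V&0&0\end{pmatrix},
\]
the unital $*$-homomorphism $\pi\colon\bb M_n\to\bb M_{3n}$, $\pi(M)=\mathrm{diag}(M,\,UMU^*,\,VMV^*)$, commutes with $X_n$ --- both $X_n\pi(M)$ and $\pi(M)X_n$ have $UM/\sqrt 3$ in position $(2,1)$, $VM/\sqrt 3$ in position $(3,1)$, and zeros elsewhere --- so taking any nonzero $M\in\cc H_n^0$ produces a non-scalar $B=\pi(M)\in\cc H_{3n}^0$ with $[X_n,B]=0$. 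The failure is structural: nothing in $[X_n,\cdot]$ ever compares two \emph{different} diagonal blocks of $\cc B$ to one another, so those blocks may drift apart freely; the ``escaping'' freedom is $n^2$-dimensional, far worse than the $2$-dimensional scalar drift you flagged in step (4). The fallback you float at the end --- symmetrizing to a Hermitian $X_n$ --- is also a dead end, since a self-adjoint (hence normal) matrix commutes with its non-scalar spectral projections and therefore always has $\mal=0$.

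The paper's matrix,
\[
\cc X=\begin{pmatrix}0&2U&0\\0&0&V\\3I_n&2I_n&I_n\end{pmatrix},
\]
is designed precisely to avoid this degeneracy. The bottom row of scalar multiples of $I_n$ couples the diagonal blocks directly: with $\cc B_0=\mathrm{diag}(A,B,C)$, the $(3,1)$ and $(3,2)$ entries of $[\cc X,\cc B_0]$ are $3(A-C)$ and $2(B-C)$, so a small commutator forces $A\approx B\approx C$; the superdiagonal blocks $2U$ and $V$ then give $B\approx U^*AU$ and $C\approx V^*BV$, which combine to $A\approx\cc E^h(A)$, and the expander bound $\|\cc E^h\colon\cc H_n^0\to\cc H_n^0\|\le\delta<1$ finishes the diagonal part. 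The unequal weights $2,3,2,1$ are also load-bearing: the proof uses them first to derive inequalities like $8y\le 6z+7\e$ that pin every off-diagonal block norm to $O(\e)$ before any of the above can run. If you wish to salvage your construction, you need to add some mechanism --- such as an identity row --- that forces the diagonal blocks toward one another, not merely toward conjugates of a single block.
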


\begin{proof}
Let us fix $n\in\bb N$ sufficiently large and choose $U,V \in \bb U(n)$ to be a quantum $\delta$-expander. Such a pair of unitaries is guaranteed to exist for $n$ sufficiently large and $\delta=0.87$ by Lemma \ref{lem:hastings}. Consider the matrix
\begin{equation*}
    \cc X = \begin{pmatrix}  0 & 2 U & 0\\  0 & 0 & V\\  3 I_n & 2 I_n & I_n\end{pmatrix}\in \bb M_{3n}.
\end{equation*}
Clearly, $\|\cc X\|\leq 9$. Let $\cc B\in \cc H_{3n}^0$ be a unit vector which we will write as
\begin{equation*}
    \cc B = \begin{pmatrix} A & X & Y \\ X^* & B & Z \\ Y^* & Z^* & C 
    \end{pmatrix}
\end{equation*}
with $A,B,C,X,Y,Z\in \bb M_n$.

Now let us compute
\begin{equation}\label{2Uright-compose}
    \cc X\cc B = \begin{pmatrix} 2UX^* & 2UB & 2UZ \\ VY^* & VZ^* & VC \\ 3A+2X^*+Y^* & 3X+2B+Z^* & 3Y+2Z+C 
    \end{pmatrix}
\end{equation}
and 
\begin{equation}\label{2Uleft-compose}
    \cc B\cc X = \begin{pmatrix} 3Y & 2AU + 2Y & XV+Y \\ 3Z & 2X^*U + 2Z & BV + Z \\
    3C & 2Y^*U + 2C & Z^*V + C 
    \end{pmatrix}.
\end{equation}

Before proceeding we will pause to introduce some convenient

\begin{notation}
For $a,b\in \bb R$ we will write $a\leq_\e b$ if $a\leq b+\e$ and $a=_\e b$ if $a\leq_\e b$ and $b\leq_\e a$, i.e, $|a-b|\leq \e$. For $X,Y\in \bb M_n$ we will write $X =_\e Y$ to denote $\|X-Y\|_2\leq \e$.
\end{notation}
In what  follows we will denote $a := \|A\|_2, x := \|X\|_2$, and analogously for the norms of all block components of $\cc X$. The notation of $[(i,j), (k,l)]$ indicates that the approximate identities are obtained by comparing the $(i,j)$-blocks and the $(k,l)$-blocks, in that order, of the matrices (\ref{2Uright-compose}) and (\ref{2Uleft-compose}).

Let us fix $\e>0$ sufficiently small and suppose that $\|[\cc X,\cc B]\|_2\leq \e$.
From the $(2,1)$ and $(2,2)$-blocks and the (1,1)-block we have that
\begin{flalign}
    && && Y &=_{\e} 3Z^*V =_{3\e} 6V^*X^*UV+6V^*Z^*V && [(2,1), (2,2)]\\
   && && & =_{3\e} 9V^*U^*YUV + 6V^*ZV; && [(1,1)]
\end{flalign}
hence, by several applications of the triangle inequality and (\ref{unitary-invariance}), $8y \leq 6z + 7\e$ as follows,
\begin{equation}\label{yz}
         8y = \|9V^*U^*YUV\|_2 - \|Y\|_2 \leq \|9V^*U^*YUV - Y\|_{2} \leq_{7\e} \|6V^*ZV\|_2
         = 6z.
\end{equation}
The (2,1)-block gives us that
\begin{equation}\label{zy}
    3z - y \leq \|Y-3Z^{*}V\|_{2} \leq \e,
\end{equation}
so by combining (\ref{yz}) and (\ref{zy}) we have that
\begin{equation}\label{y}
    8y \leq 6z+7\e \leq 2y + 9\e  \ \Rightarrow\ y \leq \frac{3}{2}\e.
\end{equation}
From this bound on $y$, we can use the (2,1)-block to deduce
\begin{equation}\label{z}
    3z \leq y + \e \leq \frac{3}{2}\e + \e \ \Rightarrow z \leq \frac{5}{6}\e.
\end{equation}
Lastly, from the (1,1)-block we get
\begin{equation}\label{x}
    2x \leq 3y + \e \leq \frac{9}{2}\e + \e \ \Rightarrow x \leq \frac{11}{4}\e.
\end{equation}

Hence, we have deduced that
\begin{flalign}\label{2U200}
   && x,y,z\leq 3\e. && 
\end{flalign}
Setting 
\begin{equation*}
    \cc B_0 = \begin{pmatrix} A & 0 & 0 \\ 0 & B & 0 \\ 0 & 0 & C 
    \end{pmatrix},
\end{equation*}
using (\ref{2U200}) the triangle inequality implies that
\begin{equation}\label{2Ucontrol-b0}
    \|\cc B_0\|_2 \geq \|\cc B\|_2 - \|\cc B - \cc B_0\|_2\geq 1 - 6(3\e) \geq 1 - 10^2\e.
\end{equation}

Since 
\begin{equation*}
    [\cc X, \cc B_0] = \begin{pmatrix} 0 & 2UB & 0 \\ 0 & 0 & VC \\ 3A & 2B & C \end{pmatrix} - \begin{pmatrix} 0 & 2AU & 0 \\ 0 & 0 & BV \\ 3C & 2C & C\end{pmatrix}
\end{equation*}
it follows by (\ref{2Uright-compose}) and (\ref{2Uleft-compose}) combined with (\ref{2U200}) that
\begin{equation}\label{2Ucontrol-c}
    \|[\cc X,\cc B_0]\|_2\leq \|[\cc X, \cc B]\|_2 + \|[\cc X, \cc B] - [\cc X,\cc B_0]\|_2 \leq \e + 36(3\e) \leq 10^3\e.
\end{equation}
Examining the entries of $[\cc X,\cc B_0]$ it follows that
\begin{equation}\label{abc-almost-equal}
\begin{cases}
    \|A - C\|_2, \|B-C\|_2 \leq 10^3\e,\\
    \|B - U^*AU\|_2, \|C - V^*BV\|_2 \leq 10^3\e;
\end{cases}
\end{equation}
hence, by the first line and the triangle inequality 
\begin{equation*}
    \|A - B\|_2, \|A - C\|_2 \leq (2\times 10^3)\e.
\end{equation*}
Along with the second line of (\ref{abc-almost-equal}) and (\ref{unitary-invariance}) this implies that
\begin{flalign}\label{2Ualmost-commute}
   && && \|A - S^*AS\|_2\leq (5\times 10^3)\e, && S=U,V,U^*,V^*.
\end{flalign}
Here is one case; the others are similar:
\begin{equation*}
    \begin{aligned}
    \|A - V^*AV\|_2 &\leq \|A - C\|_2 + \|C - V^*BV\|_2 + \|V^*BV - V^*AV\|_2\\
    &=\|A - C\|_2 + \|C - V^*BV\|_2 + \|B - A\|_2 \leq (5\times 10^3)\e.
    \end{aligned}
\end{equation*}
Setting $\cc E(A) = \frac{1}{2}(U^*AU + V^*AV)$ and using (\ref{2Ualmost-commute}) this implies that
\begin{equation}
    \begin{aligned}
        \|A - \cc E^h(A)\|_2\leq 10^4\e.
    \end{aligned}
\end{equation}

By assumption $\|\cc E^h: \cc H_n^0\to \cc H_n^0\|\leq \de<1$; hence, by the triangle inequality 
\begin{equation}
    \begin{aligned}
        (1-\delta)\|\dot A\|_2 &\leq \|\dot A\|_2 - \|\cc E^h(\dot A)\|_2 \leq \|\dot A - \cc E^h(\dot A)\|_2\\
        &= \|(A - \tau(A)I_n) - \cc E^h(A - \tau(A)I_n)\|_2\\
        &= \|A - \tau(A)I_n + \tau(A)I_n - \cc E^h(A)\|_2 = \|A - \cc E^h(A)\|_2
    \end{aligned}
\end{equation} since $\cc E^h(I_n) = I_n$. Thus, we have that
\begin{equation}\label{2Ua-almost-trace}
    \|A - \tau(A)I_n\|_2 = \|\dot A\|_2 \leq 10^4\e/(1 -\delta).
\end{equation}
Recall that for $X\in \bb M_n$ the map $X\mapsto \tau(X)I_n$ is the orthogonal projection of $X$ onto $\bb CI_n$ from which it follows that $\|X - \tau(X)I_n\|_2 = \inf_{c} \|X - c\, I_n\|_2$. Combining (\ref{2Ucontrol-c}), (\ref{abc-almost-equal}), and (\ref{2Ua-almost-trace})  this shows that 
\begin{equation}
    1 -  10^2\e \leq \|\cc B_0\|_2 = \|\cc B_0 - \tau(\cc B_0)I_{3n}\|_2\leq \|\cc B_0 - \tau(A)I_{3n}\|_2\leq 10^5\e/(1-\delta)
\end{equation}
which is impossible for $\e>0$ chosen suitably small. Thus there is a uniform constant $c>0$ depending only on $\delta$ so that $\mal(\cc X)> c = c(\delta)$. \qedhere
\end{proof}


\subsection{Connections with Quantum Edge Expanders}\label{sec:q-expander}

As seen in the proof of the main result, malnormal matrices have close connections to the theory of quantum expanders. We take the opportunity to sketch out several more ways in which the theories are connected.

\begin{lem}\label{lem:split-j}
Let $X\in \bb M_n$ and $B\in \cc H_n$, then \begin{equation}
    \|[X,B]\|_{2}^2 = \|[\re X,B]\|_{2}^2 + \|[\im X, B]\|_{2}^2.
\end{equation}
\end{lem}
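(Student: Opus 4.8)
The plan is to split $X$ into its self-adjoint and imaginary parts and observe that the cross term in the expansion of the squared norm vanishes for parity reasons.

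First I would write $X = \re X + i\,\im X$, with $\re X := \tfrac{1}{2}(X+X^*)$ and $\im X := \tfrac{1}{2i}(X - X^*)$ both self-adjoint, so that by linearity of the commutator $[X,B] = [\re X, B] + i\,[\im X, B]$. Set $P := [\re X, B]$ and $Q := [\im X, B]$. Since $B$, $\re X$, and $\im X$ are all self-adjoint, each of $P$ and $Q$ is skew-adjoint: indeed $P^* = (\re X\, B - B\,\re X)^* = B\,\re X - \re X\, B = -P$, and likewise $Q^* = -Q$. (This is the only place the hypothesis $B\in\cc H_n$ is used, and the statement genuinely needs it.)

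Next I would expand, using the inner product $\ip{M}{N} = \tr(N^*M)$ from Section \ref{sec:prelim} and its conjugate-symmetry,
\[
\|[X,B]\|_2^2 = \|P + iQ\|_2^2 = \|P\|_2^2 + \|Q\|_2^2 + \ip{P}{iQ} + \ip{iQ}{P} = \|P\|_2^2 + \|Q\|_2^2 + 2\,\re\ip{P}{iQ},
\]
so the lemma reduces to showing $\re\ip{P}{iQ} = 0$; since $\ip{P}{iQ} = -i\,\ip{P}{Q}$, this is equivalent to $\ip{P}{Q}\in\bb R$.

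The key observation — and essentially the entire content of the lemma — is that the Hilbert--Schmidt pairing of two skew-adjoint matrices is automatically real. Concretely, $\ip{P}{Q} = \tr(Q^*P) = -\tr(QP)$, while $\overline{\tr(QP)} = \tr\big((QP)^*\big) = \tr(P^*Q^*) = \tr\big((-P)(-Q)\big) = \tr(PQ) = \tr(QP)$, the last step by cyclicity of the trace; hence $\tr(QP)$, and therefore $\ip{P}{Q}$, is real. This annihilates the cross term and yields $\|[X,B]\|_2^2 = \|P\|_2^2 + \|Q\|_2^2 = \|[\re X, B]\|_2^2 + \|[\im X, B]\|_2^2$. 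There is no real obstacle here; the proof is just the self-adjointness bookkeeping that forces the interference term to vanish.
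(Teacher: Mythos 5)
Your proof is correct and follows essentially the same route as the paper's: decompose $X$ into real and imaginary parts, note that $[\re X,B]$ and $[\im X,B]$ are skew-adjoint because $B$ is self-adjoint, and observe that the cross term vanishes by cyclicity of the trace. The paper phrases this as a direct expansion of $-\tr([X^*,B][X,B])$ while you phrase it via the Hilbert--Schmidt inner product and the realness of $\ip{P}{Q}$ for skew-adjoint $P,Q$, but the underlying computation is identical.
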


\begin{proof} Since $B$ is self-adjoint we have that
\[ \|[X,B]\|_{2}^2 = \Tr([X,B]^*[X,B]) = -\Tr([X^*,B][X,B]).\]
Writing $X = S + i T$, with $S,T$ self-adjoint, we have 
\[ \Tr([X^*,B][X,B]) = \Tr([S,B]^2) + \Tr([T,B]^2) - i\Tr([T,B][S,B]) + i\Tr([S,B][T,B])\] from which it follows that $\|[X,B]\|_{2}^2 = \|[S,B]\|_{2}^2 + \|[T,B]\|_{2}^2$. \qedhere
\end{proof}

\begin{prop} Let $(U,V)$ be a pair of unitaries in $\bb M_n$ and set $J= \frac{1}{2}\bigl(\re U + i \im V\bigr)$. If $J$ is malnormal, then $(U,V)$ form a quantum edge expander with constant $\delta \leq 1 - \mal(J)^2$.

\end{prop}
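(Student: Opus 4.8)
The plan is to use Lemma~\ref{lem:split-j} to trap $\|[J,B]\|_2^2$ between $\mal(J)^2\|\dot B\|_2^2$ (from below, by malnormality) and $\tfrac14\bigl(\|[U,B]\|_2^2+\|[V,B]\|_2^2\bigr)$ (from above), then to recognize the latter quantity, via the commutator identity~(\ref{commute-identity}), as $\|\dot B\|_2^2 - \tr(\cc E_U(\dot B)\dot B)$; comparing the two estimates immediately produces the edge-expander inequality with $\delta = 1-\mal(J)^2$.

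In detail, the first step is to note that $\re U$ and $\im V$ are self-adjoint, so from $J = \tfrac12(\re U + i\im V)$ one gets $\re J = \tfrac12\re U$ and $\im J = \tfrac12\im V$. Applying Lemma~\ref{lem:split-j} to $J$ then yields, for every self-adjoint $B$,
\begin{equation*}
    \|[J,B]\|_2^2 = \|[\re J,B]\|_2^2 + \|[\im J,B]\|_2^2 = \tfrac14\|[\re U,B]\|_2^2 + \tfrac14\|[\im V,B]\|_2^2 .
\end{equation*}
The second step is to apply Lemma~\ref{lem:split-j} once more, now to $U$ and to $V$ themselves, which gives $\|[\re U,B]\|_2^2 \le \|[U,B]\|_2^2$ and $\|[\im V,B]\|_2^2 \le \|[V,B]\|_2^2$, hence $\|[J,B]\|_2^2 \le \tfrac14\bigl(\|[U,B]\|_2^2+\|[V,B]\|_2^2\bigr)$. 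The third step is to rewrite the right-hand side using~(\ref{commute-identity}) with $k=2$:
\begin{equation*}
    \tfrac14\bigl(\|[U,B]\|_2^2+\|[V,B]\|_2^2\bigr) = \tfrac12\Bigl(2\|\dot B\|_2^2 - 2\tr(\cc E_U(\dot B)\dot B)\Bigr) = \|\dot B\|_2^2 - \tr(\cc E_U(\dot B)\dot B) .
\end{equation*}
Finally, malnormality gives $\mal(J)^2\|\dot B\|_2^2 \le \|[J,B]\|_2^2$, so combining the last three displays yields $\mal(J)^2\|\dot B\|_2^2 \le \|\dot B\|_2^2 - \tr(\cc E_U(\dot B)\dot B)$, i.e.\ $\tr(\cc E_U(\dot B)\dot B) \le (1-\mal(J)^2)\tr(\dot B^2)$ for all self-adjoint $B$, which by Definition~\ref{defn:q-expander} is precisely the assertion that $(U,V)$ is a quantum $\delta$-edge expander with $\delta = 1-\mal(J)^2$.

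I do not expect a genuine obstacle here; the argument is just this short chain of (in)equalities. The only points requiring care are that the key inequality $\|[\re U,B]\|_2 \le \|[U,B]\|_2$ is nothing but Lemma~\ref{lem:split-j} applied to $U$ (so it should be invoked rather than reproved), and the bookkeeping of constants: the factor $\tfrac14$ produced by the definition of $J$ is exactly what cancels against the $k=2$ averaging in $\cc E_U$, so that $\mal(J)^2$, and not $\mal(J)^2/4$, ends up in $\delta$. It is also worth remarking that malnormality forces $\mal(J)^2>0$, hence $\delta<1$, consistent with the range $0\le\delta<1$ in Definition~\ref{defn:q-expander} (and if $\mal(J)\ge 1$ the pair is a fortiori a quantum $0$-edge expander).
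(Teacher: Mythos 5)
Your proof is correct and follows essentially the same route as the paper: two applications of Lemma~\ref{lem:split-j} (once to $J$ and once to $U,V$) to obtain $4\|[J,B]\|_2^2 \le \|[U,B]\|_2^2 + \|[V,B]\|_2^2$, then the identity~(\ref{commute-identity}) with $k=2$, and finally the malnormality lower bound. The only difference is cosmetic: you spell out the intermediate step $\re J = \tfrac12\re U$, $\im J = \tfrac12\im V$, whereas the paper compresses this into the single display.
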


\begin{proof}
Since $J$ is malnormal there is some $\kappa>0$ so that $\|[J,B]\|_2^2\geq \kappa^2\|\dot B\|_2^2$ for all $B$ self-adjoint. We fix $B$ self-adjoint. It follows from two applications of Lemma \ref{lem:split-j} that
\begin{equation}\label{j-commute}
    4\|[J,B]\|_2^2 = \|[\re U,B]\|_2^2 + \|[\im V,B]\|_2^2\leq \|[U,B]\|_2^2 + \|[V,B]\|_2^2.
\end{equation}
Combining (\ref{j-commute}) with (\ref{commute-identity}), we have
\begin{equation}
    \begin{aligned}
        \kappa^2\|\dot B\|_2^2\leq \|[J,B]\|_2^2\leq  \|\dot B\|_2^2 - \tr(\cc E(\dot B)\dot B),
    \end{aligned}
\end{equation}
where $\cc E(X) = \frac{1}{2}(U^*XU + V^*XV)$ as above. Thus $\tr(\cc E(\dot B)\dot B)\leq (1-\kappa^2)\tr(\dot B^2)$ and $\cc E$ is a quantum edge expander. \qedhere
\end{proof}

\begin{remark}\label{mal-converse}
    In terms of a converse, even assuming that $(U,V)$ is a quantum expander, we were not able to determine whether $J$ is malnormal with effective control on $\mal(J)$, so this remains an interesting open problem.
\end{remark}

Any contraction $X\in\bb M_n$ may be decomposed as an average of two unitaries, say $X = (U+V)/2$. We do not know whether $X$ being malnormal implies that such $(U,V)$ can be chosen to form a quantum (edge) expander. However, the following result lies in this direction.

\begin{prop}
Let $U_1, \dotsc, U_k$ be unitaries in $\bb M_n$. For $\om\in \bb T^k$ let $J_\om := \frac{1}{\sqrt {2k}}\sum_{i=1}^k \om_i U_i$. Let $\la^k$ be the Haar probability measure on $\bb T^k$. If $J_\om$ is malnormal $\la^k$-almost surely, then $(U_1,\dotsc, U_k)$ form a quantum edge expander.
\end{prop}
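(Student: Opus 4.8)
The plan is to mimic the proof of the previous proposition, averaging the commutator identity over the torus rather than using a deterministic decomposition into real and imaginary parts. First I would fix a self-adjoint $B$ and expand $\|[J_\om, B]\|_2^2 = \tr([J_\om,B]^*[J_\om,B])$ directly: since $J_\om = \frac{1}{\sqrt{2k}}\sum_i \om_i U_i$, we get $[J_\om,B] = \frac{1}{\sqrt{2k}}\sum_i \om_i [U_i,B]$, so
\begin{equation}\label{omega-expand}
    \|[J_\om,B]\|_2^2 = \frac{1}{2k}\sum_{i,j}\overline{\om_i}\om_j\,\tr([U_i,B]^*[U_j,B]).
\end{equation}
Now I would integrate over $\om$ against $\la^k$: since the coordinate functions $\om\mapsto \om_i$ are orthonormal in $L^2(\bb T^k,\la^k)$, all cross terms $i\neq j$ integrate to zero, leaving
\begin{equation}\label{omega-average}
    \int_{\bb T^k} \|[J_\om,B]\|_2^2\, d\la^k(\om) = \frac{1}{2k}\sum_{i=1}^k \|[U_i,B]\|_2^2.
\end{equation}

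Next I would invoke the almost-sure malnormality hypothesis: there is some $\kappa>0$ (a priori the constant could depend on $\om$, so the first genuine point is to extract a uniform one) with $\|[J_\om,B]\|_2^2 \geq \kappa^2\|\dot B\|_2^2$ for $\la^k$-a.e.\ $\om$ and all self-adjoint $B$. Integrating this inequality over $\om$ and combining with \eqref{omega-average} gives $\kappa^2\|\dot B\|_2^2 \leq \frac{1}{2k}\sum_i \|[U_i,B]\|_2^2$. On the other hand, the identity \eqref{commute-identity} reads $\frac{1}{k}\sum_i \|[U_i,B]\|_2^2 = 2\|\dot B\|_2^2 - 2\tr(\cc E_U(\dot B)\dot B)$, so $\frac{1}{2k}\sum_i\|[U_i,B]\|_2^2 = \|\dot B\|_2^2 - \tr(\cc E_U(\dot B)\dot B)$. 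Chaining the two yields $\tr(\cc E_U(\dot B)\dot B) \leq (1-\kappa^2)\|\dot B\|_2^2 = (1-\kappa^2)\tr(\dot B^2)$ for all self-adjoint $B$, which by \eqref{edge-expander-1} is exactly the statement that $(U_1,\dotsc,U_k)$ is a quantum $\delta$-edge expander with $\delta \leq 1-\kappa^2$.

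The step I expect to be the main obstacle is the extraction of a \emph{uniform} malnormality constant. The hypothesis ``$J_\om$ is malnormal $\la^k$-almost surely'' only gives, for a.e.\ fixed $\om$, \emph{some} positive $\mal(J_\om)$, and a priori $\inf_\om \mal(J_\om)$ could be $0$ on a positive-measure set's complement, or even the function $\om\mapsto \mal(J_\om)$ could fail to be bounded below on any full-measure set. To handle this I would argue as follows: the map $\om \mapsto \mal(J_\om)$ is continuous on $\bb T^k$ (it is the minimum over the compact unit sphere of $\cc H_n^0$ of the continuous function $(\om,B)\mapsto \|[J_\om,B]\|_2$, hence continuous by a standard compactness argument), so if it is positive $\la^k$-almost everywhere it is positive on a dense set, and being continuous and nonnegative it is in fact positive on a dense open set; but this is not yet enough for a uniform bound. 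The cleanest fix is to observe that we do not actually need a uniform $\kappa$: it suffices to integrate. Since $\mal(J_\om)>0$ a.e., the function $\om\mapsto \|[J_\om,B]\|_2^2/\|\dot B\|_2^2$ (for $\dot B\neq 0$) is a.e.\ positive, but to get a clean constant out of $\int \|[J_\om,B]\|_2^2\,d\la^k \geq \big(\int \mal(J_\om)^2 d\la^k\big)\|\dot B\|_2^2$ one only needs $c := \int_{\bb T^k}\mal(J_\om)^2\,d\la^k(\om) > 0$, which follows from a.e.\ positivity of the integrand. Then the argument above goes through verbatim with $\kappa^2$ replaced by $c$, giving $\delta \leq 1-c$. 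I would present the proof in this form, remarking that continuity of $\om\mapsto\mal(J_\om)$ in fact upgrades ``a.s.'' to ``everywhere'' and even to a uniform lower bound, but that the integrated version is all that is logically required.
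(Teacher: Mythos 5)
Your argument is essentially the same as the paper's: expand $\|[J_\om,B]\|_2^2$, integrate over $\bb T^k$ so the cross terms drop out by orthogonality of the coordinate characters, and chain with identity \eqref{commute-identity}. The paper handles the ``uniform constant'' issue in exactly the way you settle on: it sets $\kappa^2 := \int_{\bb T^k}\mal(J_\om)^2\,d\la^k(\om)$ and observes that $\kappa>0$ since the integrand is a.e.\ positive; the integrated inequality $\kappa^2\|\dot B\|_2^2\leq\int\|[J_\om,B]\|_2^2\,d\la^k(\om)$ then suffices. Your explicit discussion of why this sidesteps the need for a uniform lower bound on $\om\mapsto\mal(J_\om)$ is a useful clarification the paper leaves implicit.

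One correction is in order. Your closing remark that continuity of $\om\mapsto\mal(J_\om)$ ``upgrades `a.s.'\ to `everywhere' and even to a uniform lower bound'' is false, and in fact contradicts what you correctly noted earlier in the same paragraph. A continuous nonnegative function on $\bb T^k$ can be positive almost everywhere while vanishing on a nonempty closed null set (a single point already does it), so its infimum can be $0$; almost-sure positivity plus continuity yields neither everywhere-positivity nor a uniform lower bound. Continuity does earn its keep, though: it gives measurability of $\om\mapsto\mal(J_\om)$, which is needed for the integral defining $\kappa$ to make sense. So keep the continuity observation for that purpose, delete the final ``upgrade'' remark, and the proof is clean and matches the paper.
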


\begin{proof}
Let $\kappa(\om) = \mal(J_\om)$ and set $\kappa := \left(\int_{\bb T^k} \kappa(\om)^2\, d\la^k(\om)\right)^{1/2}$. Fixing $B$ self-adjoint we have that 
\begin{equation}
    \begin{aligned}
        \kappa^2\|\dot B\|_2^2 \leq \int_{\bb T^k} \|[J_\om,B]\|_2^2\, d\la^k(\om) &= \frac{1}{2k}\sum_{i,j=1}^n \int_{\bb T^k} \om_i\bar\om_j \tr([U_i,B][U_j,B]^*) d\la^k(\om)\\
        &= \frac{1}{2k}\sum_{i=1}^k \|[U_i,B]\|_2^2 = \|\dot B\|_2^2 - \tr(\cc E(\dot B)\dot B)
    \end{aligned}
\end{equation}
where $\cc E(X) = \frac{1}{k}\sum_{i=1}^k U_i^*XU_i$. The last equality is (\ref{commute-identity}). This proves the claim. \qedhere
\end{proof}


\section{Experimental Methods and Results}\label{sec:experimental}
As motivated in Section \ref{sec:q-expander}, we wish to investigate the distribution of $\mal(J)$ for matrices of the form $J= \frac{1}{2}\bigl(\re(U) + i \im(V)\bigr)$, where $(U,V)$ are random unitary matrices sampled independently according to the Haar measure. We also wish to study the asymptotics of this distribution as $n$ tends to infinity.

{\bf N.B.} In order to reduce the number of variables to optimize over, we only consider the real-valued equivalent of this problem; that is, for the remainder of this section we investigate matrices of the form $J = \frac{1}{2}(\re(U)+i\im(V))$, where $(U,V)$ are independent Haar random \emph{orthogonal} matrices.

For a given matrix $X$, the quantity $\mal(X)$ is the solution to the following optimization problem.
\begin{equation} \label{op_prob}
\begin{aligned}
& \underset{B \in \mathbb{R}^{n \times n}}{\text{minimize}} 
& & \|XB-BX\|_{2} \\
& \text{subject to}
& &  \|B\|_{2} -1 = 0 \\
& & & \tr(B) = 0 \\
\end{aligned}
\end{equation}

We will now present ($\ref{op_prob}$) as an optimization problem over vectors as opposed to matrices. Let $d(n) := \dim(\mathcal{H}_{n}^{0}) = \frac{n(n+1)}{2}-1$, then $\mathcal{H}_{n}^{0}$ and $\mathbb{R}^{d(n)}$ are isometrically isomorphic as inner product spaces. Let $\varphi : \mathbb{R}^{d(n)} \to \mathcal{H}_{n}^{0}$ be an isometry, that is, an isomorphism that preserves the norm between $\mathbb{R}^{d(n)}$ and $\mathcal{H}_{n}^{0}$.
We now consider $f : \mathbb{R}^{d(n)} \to \mathbb{R} $ defined as
\begin{equation}
    f(b) := \|X \varphi(b) - \varphi(b) X \|_{2}^{2}.
\end{equation}

Since $f$ is a sum of squares, it is a quadratic function with the form
\begin{equation}\label{quad-op-prob}
    f(b) = \frac{1}{2} b^{T}H(X)b = \| X \varphi(b) - \varphi(b) X \|_{2}^{2}
\end{equation}
where $H(X) \in \mathbb{M}_{d(n)}$ is the Hessian of $f$ whose entries are completely determined by the fixed matrix $X$. Since $f$ is $\mathcal{C}^{\infty}$ and non-negative, $H(X)$ is symmetric and positive semidefinite. Lastly, since $\varphi$ is an isometry, the first equality constraint of ($\ref{op_prob}$) that $\|\varphi(b)\|_{2} - 1 =0$ is equivalent to $b^{T}b-1 = 0$ for all $b \in \mathbb{R}^{d(n)}$.  Let $\la_{1}$ denote the smallest eigenvalue of $H(X)$, then the global minimum of ($\ref{quad-op-prob}$) subject to $b^{T}b - 1 =0$ is $\la_1 /2$, and thus $\mal(X)= \sqrt{\la_{1}/2}.$ Further, any global minimizer is then an eigenvector of $H(X)$ corresponding to $\la_{1}$. Setting $H = H(X)$ we see that the problem stated in ($\ref{op_prob}$) is a special case of the more general ``quadratic over a sphere'' problem \cite{Hager2001}:
\begin{equation} \label{gen_op_prob}
\underset{x \in \mathbb{R}^{n}}{\text{minimize}}\ \,
\frac{1}{2} x^{T}Hx,\quad\text{s.t.}\ \ x^{T}x - 1 = 0 \\
\end{equation}
for $H$ a hermitian $n\times n$ (real) matrix.

We now establish the following result that will be essential in using optimization algorithms to find constants of malnormality. The result is probably well-known, but we include a proof here for the sake of completeness. 

\begin{prop} \label{opt_thm} 
A vector $x^{*} \in \mathbb{R}^{n}$ is a local minimizer of $(\ref{gen_op_prob})$ if and only if it is a global minimizer of $(\ref{gen_op_prob})$.
\end{prop}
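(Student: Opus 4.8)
The plan is to use the method of Lagrange multipliers together with the special structure of the quadratic objective on the sphere. First I would write down the first-order (KKT) conditions for a local minimizer $x^{*}$ of $(\ref{gen_op_prob})$: since the only constraint is $x^{T}x = 1$ and its gradient $2x$ never vanishes on the sphere, the constraint qualification holds, and there is a multiplier $\mu \in \mathbb{R}$ with $Hx^{*} = \mu x^{*}$. Thus every local minimizer (indeed every constrained critical point) is an eigenvector of $H$, and the corresponding objective value is $\tfrac12 (x^{*})^{T}Hx^{*} = \tfrac12 \mu$. So the candidate local minimizers are exactly the unit eigenvectors of $H$, and the global minimum value is $\tfrac12 \la_{1}$, attained precisely on the unit eigenvectors for $\la_{1}$.

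Next I would bring in the second-order necessary condition to rule out any eigenvector whose eigenvalue is not $\la_{1}$. If $x^{*}$ is a unit eigenvector with $Hx^{*} = \mu x^{*}$ and $\mu > \la_{1}$, pick a unit eigenvector $v$ for $\la_{1}$; then $v \perp x^{*}$, so $v$ lies in the tangent space to the sphere at $x^{*}$. The Lagrangian is $L(x) = \tfrac12 x^{T}Hx - \tfrac12 \mu (x^{T}x - 1)$, whose Hessian is $H - \mu I$, and $v^{T}(H - \mu I)v = \la_{1} - \mu < 0$. This violates the second-order necessary condition for a local minimizer, so $\mu = \la_{1}$. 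Hence any local minimizer has objective value $\tfrac12\la_{1}$, which is the global minimum; this gives the forward implication. For a fully elementary alternative one can avoid quoting the second-order test: parametrize the great circle $\gamma(t) = \cos(t)\, x^{*} + \sin(t)\, v$ on the sphere through $x^{*}$, compute $g(t) := \tfrac12 \gamma(t)^{T} H \gamma(t) = \tfrac12\big(\mu \cos^{2} t + \la_{1}\sin^{2} t\big)$ using $Hx^{*} = \mu x^{*}$, $Hv = \la_{1} v$, and $x^{*}\perp v$, and observe $g(t) = \tfrac12\mu - \tfrac12(\mu - \la_{1})\sin^{2} t < g(0)$ for small $t \neq 0$, contradicting local minimality unless $\mu = \la_{1}$.

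The reverse implication is immediate: a global minimizer is in particular a local minimizer. So the proof reduces to the forward direction, and the two bullet points above give two routes for it.

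The only mild obstacle is bookkeeping around degenerate or non-generic cases — e.g. ensuring the argument is valid when $\la_{1}$ has multiplicity greater than one, or when $H$ is a scalar multiple of the identity (in which case every unit vector is both a local and a global minimizer and there is nothing to prove) — but the eigenvector/great-circle computation handles all of these uniformly, so no genuine difficulty arises. I would also remark that this recovers the stated consequence $\mal(X) = \sqrt{\la_{1}/2}$ and that any global minimizer of $(\ref{op_prob})$ is a $\la_{1}$-eigenvector of $H(X)$, which is what the numerical section needs.
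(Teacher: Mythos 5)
Your proof is correct and follows essentially the same route as the paper's: invoke the first- and second-order necessary conditions for the equality-constrained problem, conclude from the first-order condition that any local minimizer $x^*$ is a unit eigenvector of $H$ (your multiplier $\mu$ is the paper's $-2\xi^*$), and then rule out any eigenvalue $\mu > \la_1$ by feeding a unit $\la_1$-eigenvector $v \perp x^*$ into the second-order inequality. The great-circle parametrization $\g(t) = \cos(t)\,x^* + \sin(t)\,v$ you offer as an alternative is a pleasant way to make the second-order step self-contained without citing a textbook optimality theorem.
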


\begin{proof}
If $x^{*}$ is a global minimizer then the conclusion is immediate, so suppose that $x^{*}$ is a local minimizer of ($\ref{gen_op_prob}$). By the Second Order Necessary conditions as stated in \cite[Theorem 9.3.1]{Fletcher2000} there exists $\xi^{*} \in \mathbb{R}$ such that
\begin{align*}
    Hx^{*} + 2 \xi^{*} x^{*} &= 0 \\
    (x^{*})^{T}x^{*} - 1 &= 0 \\
    d^{T} (H + 2 \xi^{*} I) d &\geq 0, \forall d\ \textup{s.t.}\ (x^{*})^{T}d = 0.
\end{align*}

The first two conditions imply that $-2\xi^*$ is an eigenvalue.
For a contradiction, suppose that there exists an eigenvalue $\la^*$ of $H$ such that $\lambda^{*} < -2\xi^{*}$. Let $v^{*}$ be a unit eigenvector corresponding to $\lambda^{*}$, then since $v^{*}$ and $x^{*}$ are in different eigenspaces of $H$, we must have that $(x^{*})^{T}v^{*} = 0$. However,
\begin{equation*}
    (v^{*})^{T}(H + 2 \xi^{*} I)v^{*} =\lambda^{*} + 2\xi^{*} < 0,
\end{equation*}
which contradicts the third stipulation of the second order necessary conditions. Hence, $-2\xi^{*}$ must be the smallest eigenvalue of $H$. Since $x^{*}$ is then an eigenvector corresponding to the smallest eigenvalue of $H$, we have that $x^{*}$ is the global minimizer of ($\ref{gen_op_prob}$).
\end{proof}

\subsection{Numerical Implementation}
We utilized two approaches to calculate a matrix's constant of malnormality. For small dimension, we computed the Hessian $H(X)$ and then computed its smallest eigenvalue. The other approach involved using MATLAB's optimization toolbox to solve the optimization problem outlined in ($\ref{gen_op_prob}$). Both approaches rely on constructing some explicit isometry $\varphi$ between $\mathbb{R}^{d(n)}$ and $\mathcal{H}_{n}^{0}$. 
The precise details of our implementation are described on the project's GitHub repository \cite{githubMalMat}.

For the first approach, given a general matrix $X \in \mathbb{M}_{n}$ and vector $b \in \mathbb{R}^{d(n)}$ we used the MATLAB symbolic toolbox \cite{matlabsymbolic} to compute $f(b) = \| X\varphi(b)-\varphi(b)X \|_{2}^{2}$. Then, we symbolically differentiated $f$ to obtain the Hessian $H(X)$ as a function of the entries of the fixed matrix $X$. Lastly, we evaluate the Hessian for a large number of matrices $X$, computing $\mal({X}) = \sqrt{\la_{1}/2}$ for each matrix. 

However, the Hessian files created in the aforementioned procedure became exceedingly large very quickly; thus, to compute a large enough quantity of malnormality constants we turned to MATLAB's optimization toolbox \cite{matlaboptimization}. For a given matrix $X$ we posed the problem (\ref{gen_op_prob}) with $H= H(X)$. 
As Proposition $\ref{opt_thm}$ shows, if the algorithm converges to a local minimum, then that minimum is the global minimum. Hence, the square root of that global minimum is $\mal({X})$. MATLAB returns an exit flag indicating whether or not the algorithm converged to a local minimum (with respect to a set of specified tolerances); thus, as long as an exit flag indicating convergence to  a local minimum was returned, we accepted the value as a global minimum.

\subsection{Results}
Let $\mathcal{J}_{n} = \left\{(U+U^{*}+V-V^{*})/4: U,V \in \bb O(n) \right\}$ be equipped with the push-forward of the product Haar measure on $\bb O(n)\times \bb O(n)$, and let $\mal(\mathcal{J}_{n})$ denote the distribution of the constants of malnormality of $\mathcal{J}_{n}$. To study these distributions, we compute the constant of malnormality for a large number of matrices generated from $\mathcal{J}_{n}$ for $n = 3, \dots 30$. For $n \leq 17$, we used the first approach outlined previously (direct computation of the Hessian). For $n \geq 18$ we used MATLAB's {\sf fmincon} solver with the interior-point algorithm and only included values which had an exit flag reporting convergence to a minimum.

To see how the distribution of $\mal(\mathcal{J}_{n})$ changes as $n$ increases, we use plot the densities of the empirical distributions for $n=10,15,20,25,30$ on the same axis. We obtained these distributions by fitting a kernel smoothing function with Gaussian kernel, which allowed us to avoid introducing any distributional assumptions on the data. 

\begin{figure}[ht]
\centering
\includegraphics[scale=0.5]{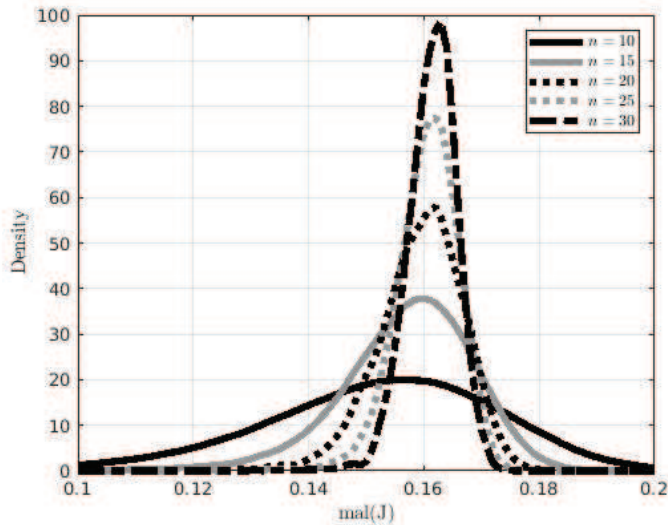}
\caption{$\mal(\mathcal{J}_{n})$ for various n}
\label{fig2}
\end{figure}

From Figure $\ref{fig2}$, it appears that as the dimension increases, the variance of the distribution is shrinking and the center of the distribution is converging to a value around 0.16. Of all the distributions supported in MATLAB's Distribution Fitter application \cite{matlabstatistics}, the Burr Type XII Distribution (a generalized log-logistic distribution) appears to be the best fit for the distributions. Since the distributions appear to converge to a point distribution, it is of interest to see if that is the case, and if so, what the value might be. To address these questions, we performed a regression analysis on the data.

\subsection{Empirical Asymptotics} We used the MATLAB Curve Fitting application \cite{matlabstatistics} to fit a power regression model $p(n) = \alpha n^\beta + \g$ to both the variance and the mean of $\mal(\mathcal{J}_{n})$.

Let $\Var(n)$ denote the variance of the distribution $\mal(\mathcal{J}_n)$ and $\mu(n)$ denote the mean. Then the parameter estimates for the fitted models
are summarized in Table \ref{summarytable_regression}. 

Thus, empirically at least, the growth of $\Var(n)$ is roughly proportional to $n^{-3}$. Since $0$ is included in the confidence interval for $\g$ in the $\Var(n)$ model, we can reasonably conclude that $\Var(n) \to 0$ as $n \to \infty$. It is important to note that the calculated means and variances for $\mal(\mathcal{J}_{n})$, $n=3,4,5$, are excluded because for $n \geq 6$ the distributions of $\mal(\mathcal{J}_{n})$ appear homogeneous in shape. 
These analyses are depicted in Figure $\ref{figreg}$. These results suggest the following 
\begin{conj} The limit of $\mu(n)$ exists as $n\to \infty$ and is approximately equal to $0.1654$. Further it holds that for all $\epsilon > 0$ that
    $\bb P\left(|\mal(\mathcal{J}_{n}) - \mu(n) | \geq \epsilon\right) = O\left(\epsilon^{-2} n^{-3}\right).$
\end{conj}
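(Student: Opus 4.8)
The plan is to separate the statement into the limit claim for $\mu(n)$ and the concentration claim, and to reduce the latter to a variance estimate. By Chebyshev's inequality $\bb P\bigl(|\mal(\mathcal{J}_n)-\mu(n)|\ge\e\bigr)\le \e^{-2}\,\Var\bigl(\mal(\mathcal{J}_n)\bigr)$, so the concentration bound is \emph{equivalent} (given Chebyshev) to $\Var\bigl(\mal(\mathcal{J}_n)\bigr)=O(n^{-3})$, which is exactly the decay the regression above detects. This variance bound also drives the limit claim: it gives $\mal(\mathcal{J}_n)-\mu(n)\to 0$ in probability, indeed almost surely along the full sequence since $\sum_n n^{-3}<\infty$, so once one knows that $\mal(\mathcal{J}_n)$ converges in probability to some deterministic $\mu_\infty$, it follows that $\mu(n)\to\mu_\infty$.

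For the variance estimate I would use concentration of measure on $\bb O(n)\times\bb O(n)$ applied to $F(U,V):=\mal\bigl((U+U^*+V-V^*)/4\bigr)$. First, $F$ is $O(1)$-Lipschitz for the Hilbert--Schmidt metric: for each unit $B\in\cc H_n^0$ the map $X\mapsto\|[X,B]\|_2$ is $2$-Lipschitz in operator norm, hence in $\|\cdot\|_2$, and taking an infimum over $B$ and composing with the $O(1)$-Lipschitz map $(U,V)\mapsto(U+U^*+V-V^*)/4$ preserves this. The Poincar\'e inequality for $\bb O(n)$, whose spectral gap is of order $n$, then yields $\Var(F)\le Cn^{-1}\,\E|\nabla F|^2$, so it remains to prove $\E|\nabla F|^2=O(n^{-2})$. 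Computing $\nabla F$ by a first-variation argument at an extremal $B_*$ (valid wherever the minimizer is essentially unique), one bounds $|\nabla F|$ by a constant times $\|\,[\,[\mathcal{J}_n,B_*],B_*]\,\|_2/\mal(\mathcal{J}_n)$, and the decay should come from \emph{delocalization}: a generic extremal eigenvector $B_*$ of the Hessian $H(\mathcal{J}_n)$ is spread out, i.e.\ has small operator norm relative to $\|\cdot\|_2$, which forces those commutators to be small. Making this quantitative, uniformly over the Haar randomness, is the main obstacle on this side. The soft Lipschitz bound alone gives only $\Var=O(n^{-1})$, and even a one-step delocalization improvement seems to reach only $O(n^{-2})$ (still yielding the weaker $O(\e^{-2}n^{-2})$ rate); squeezing out the exponent $3$ appears to require a genuinely finer analysis of the extremal $B_*$, or an honest restriction of the conjecture to the provable rate.

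For the limit, pass to the free-probability model. By strong asymptotic freeness of independent Haar orthogonal matrices (Haagerup--Thorbj\o rnsen, Collins--Male), the tuple $(U,V,U^*,V^*)$ converges strongly to a pair $(u,v)$ of free Haar unitaries generating $L(\bb F_2)$, and—invoking strong convergence also for the left and right multiplication operators acting on $\bb M_n\cong L^2$—the Hessians $H(\mathcal{J}_n)$, which are fixed real polynomials in $L_U,R_U,\dots$, converge strongly to a positive operator $H_\infty$ on the self-adjoint part of $L^2(L(\bb F_2))\ominus\bb C1$. Strong convergence entails Hausdorff convergence of spectra, hence $\lambda_{\min}\bigl(H(\mathcal{J}_n)\bigr)\to\lambda_{\min}(H_\infty)$ and therefore $\mal(\mathcal{J}_n)\to\mu_\infty:=\sqrt{\lambda_{\min}(H_\infty)/2}$ in probability; combined with the variance bound this gives $\mu(n)\to\mu_\infty$, where $\mu_\infty^2=\tfrac12\inf\{\|[\,j,b\,]\|_2^2:\ b=b^*\in L^2_0,\ \|b\|_2=1\}$ with $j=\tfrac12(\re u+i\im v)$.

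The two deepest points are then: (i) proving $\mu_\infty>0$, i.e.\ that $j$ is ``malnormal'' inside $L(\bb F_2)$ with $\ad_j$ bounded below on $L^2_0$—this is essentially the free-limit form of Conjecture \ref{main-conj}, is \emph{not} supplied by the $\bb M_{3n}$ construction of the main Proposition, and likely needs either a transfer of that block trick to the tracial von Neumann algebra setting or a direct moment/Haagerup-inequality estimate on $\ad_j$ exploiting freeness; and (ii) identifying the value $\mu_\infty\approx 0.1654$. The second seems out of reach in closed form—the number is not an obvious algebraic or $\sqrt{2k-1}/k$-type constant—so realistically one would settle for rigorous two-sided bounds on $\mu_\infty$ bracketing the numerics, leaving the exact value open. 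I expect (i), together with the $n^{-3}$ rate, to be the crux of the whole statement.
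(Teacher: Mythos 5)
The statement you were asked to prove is explicitly labeled a \emph{Conjecture} in the paper and is supported only by the numerical regression analysis reported in Table~\ref{summarytable_regression} and Figure~\ref{figreg}; the paper offers no proof, so there is nothing to compare your argument against on the paper's side. Your proposal is thus not a proof, and you are candid about that. What you have written is a reasonable research sketch: the Chebyshev reduction of the concentration claim to $\Var(\mal(\mathcal{J}_n))=O(n^{-3})$ is exactly right, the soft observation that $(U,V)\mapsto\mal((U+U^*+V-V^*)/4)$ is $O(1)$-Lipschitz in Hilbert--Schmidt metric combined with the $O(n)$ Poincar\'e constant on $\bb O(n)$ does yield the rigorous but weaker $\Var=O(n^{-1})$, and your delocalization heuristic for the extremal $B_*$ plausibly upgrades this to $O(n^{-2})$. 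You correctly flag that neither of these reaches the empirically observed exponent $3$, and you correctly identify the positivity $\mu_\infty>0$ (a free-probabilistic analogue of Conjecture~\ref{main-conj}) and the numerical value $0.1654$ as genuinely open.

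Two points deserve sharper scrutiny. First, the strong-convergence step is not as automatic as stated: the Hessian $H(\mathcal{J}_n)$ is an operator on $\cc H_n^0$, a space of dimension $\sim n^2/2$, built from left- and right-multiplication operators $L_{U},R_{U},L_{V},R_{V}$, not a polynomial in $U_n,V_n$ directly. To get convergence of $\lambda_{\min}(H(\mathcal{J}_n))$ one needs a tensor-product reformulation (as Pisier does for $\cc E_U$, relating $\|\cc E_U\|_{\cc H_n^0}$ to $\|\sum_i \bar U_i\otimes U_i\|$ on the complement of the invariant vector) together with strong asymptotic freeness of the tuple $(U,\bar U,V,\bar V)$, and then an argument that this gives Hausdorff convergence of the \emph{bottom} of the spectrum restricted to the real trace-zero self-adjoint subspace. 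This is plausible given the existing technology, but it is a nontrivial lemma, and you should state it as such rather than as a consequence of Collins--Male off the shelf. Second, there is a normalization slip: with the paper's convention $\mal(X)^2=\lambda_{\min}(H(X))/2=\inf_{\|B\|_2=1,\,B\in\cc H_n^0}\|[X,B]\|_2^2$, and passing to the scale-invariant form, the limiting constant should be $\mu_\infty^2=\inf\{\|[j,b]\|_{L^2}^2 : b=b^*\in L^2_0,\ \|b\|_{L^2}=1\}$ with no extra factor of $\tfrac12$. Neither point changes your overall assessment, namely that the crux is the $n^{-3}$ rate together with proving $\mu_\infty>0$, and that the exact constant $0.1654$ is out of reach; that assessment is sound and matches what the paper itself treats as an open problem.
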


\begin{figure}[t]%
\centering
\includegraphics[scale=0.5]{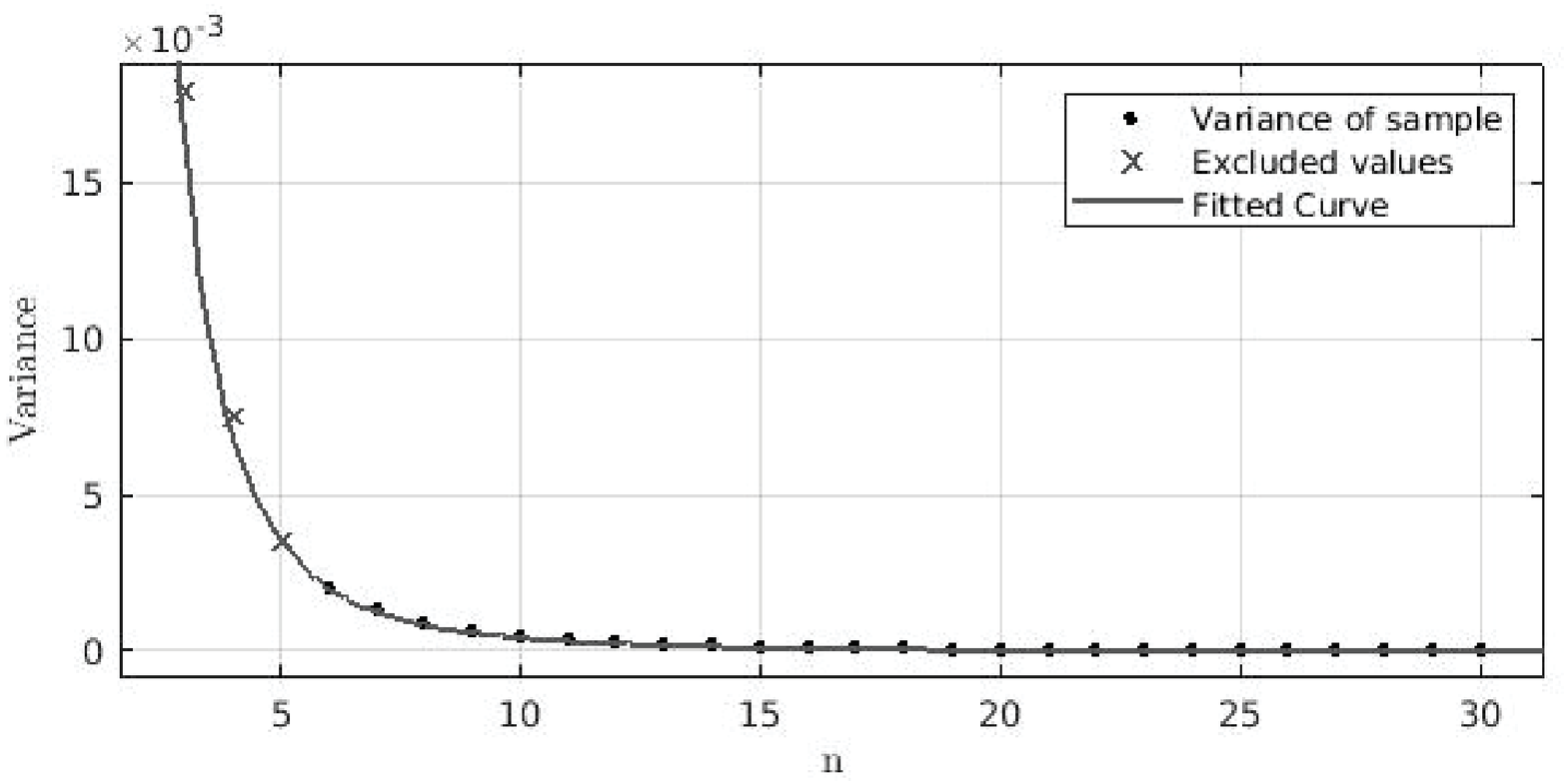}%
\qquad
\includegraphics[scale=0.5]{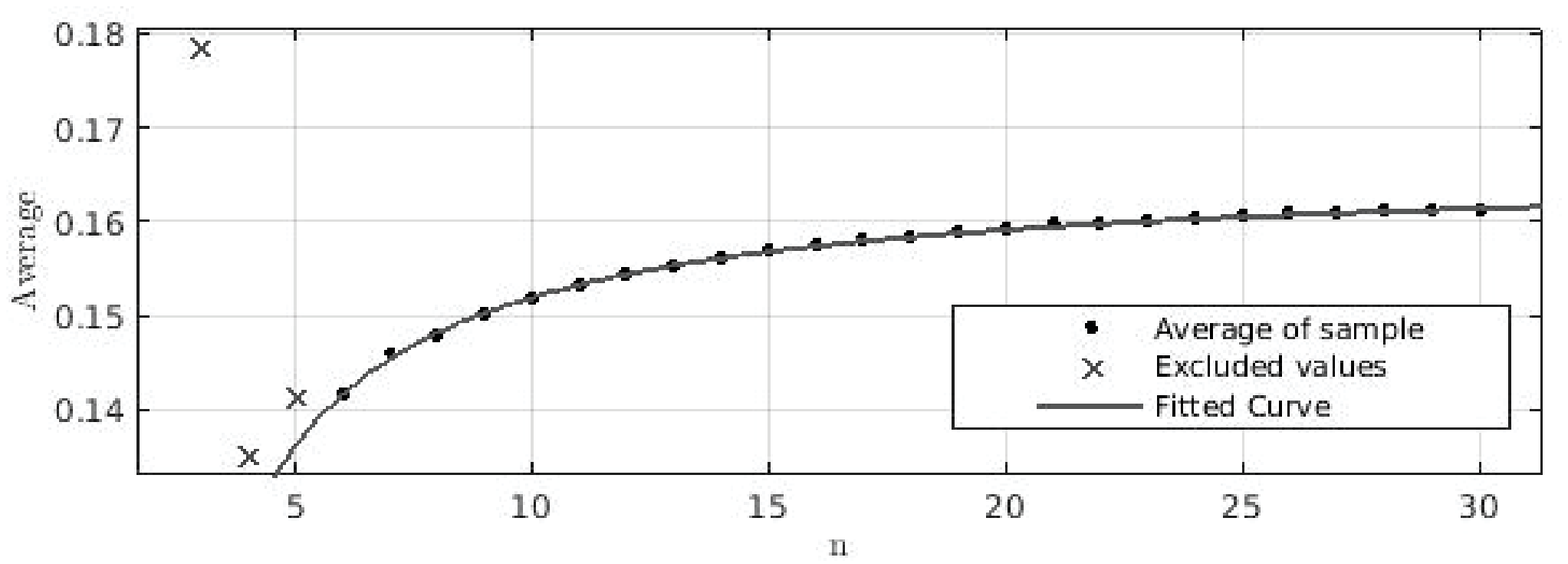}%
\caption{Regression analysis of variance and average of $\mal(\mathcal{J}_n)$}%
\label{figreg}%
\end{figure}

\begin{table}[H]
    \centering
    \begin{tabular}{|m{18mm}| m{25mm} | m{28mm}| m{25mm} | m{28mm}| }
    \hline
    &\multicolumn{2}{c|}{$\Var(n)$}  &\multicolumn{2}{c|}{$\mu(n)$}\\
    \hline
    Parameter & Point Estimate& 95\% Conf.\ Int.\ & Point Estimate & 95\%  Conf.\ Int.\ \\
    \hline
    $\alpha$ & 0.4176 & (0.4093, 0.4259) & -0.1768 &  (-0.191, -0.1627)\\
    \hline
    $\beta$ & -2.97 & (-2.981, -2.96) & -1.12 & (-1.171, -1.069)\\
    \hline
    $\g$ &  9.861 $\times 10^{-8}$ & ($-1.241 \times 10^{-6}$, $1.439 \times 10^{-6}$) &  0.1654 & (0.165, 0.1659)  \\
    \hline
    \end{tabular}
    \caption{Parameter estimates for power regression model $\alpha n^{\beta} + \gamma$}
    \label{summarytable_regression}
\end{table}

\section{Further Problems}

We list several other natural problems and conjectures related to malnormal matrices.

\begin{problem}
Construct a malnormal sequence of matrices $A_n$ with $A_n\in\bb M_n$ for all $n=1,2,3,\dotsc.$
\end{problem}

\begin{problem}
Find a sharp quantitative bound for the constant $\gamma$ in Theorem \ref{main}.
\end{problem}

\begin{problem}
Determine the distribution of eigenvalues for $\cc J_n$. 
\end{problem}

There is a difference in the distribution of eigenvalues of $\mathcal{J}_{n}$ when the set is constructed from orthogonal matrices as opposed to unitary matrices. For convenience, we use $\mathcal{J}_{n}(O)$ and $\mathcal{J}_{n}(U)$ when the set is constructed from orthogonals and unitaries, respectively. The relative frequency of the eigenvalues of $10^5$ matrices from $\mathcal{J}_{10}, \mathcal{J}_{50},$ and $\mathcal{J}_{100}$ are shown in Figures \ref{figorth} and \ref{figunitary}. 
Both $\mathcal{J}_{n}(O)$ and $\mathcal{J}_{n}(U)$ display a clustering of eigenvalues in the corner of a clearly-defined ``pillow'' shaped region, and this clustering intensifies as $n$ increases. Note the band of high probability of real-valued eigenvalues in $\mathcal{J}_{n}(O)$. A similar property is observed in the eigenvalues of the real-valued Ginibre ensemble \cite{Baik2020}. 
The behavior displayed in Figure $\ref{figorth}$ suggests that the limiting distribution of the eigenvalues of $\mathcal{J}_{n}(O)$ is likely complicated, which would make it an interesting problem for further study, especially if the techniques from \cite{Baik2020} could be modified to this situation.

\begin{problem} Let $X_n$ be a random matrix in the (normalized) $n\times n$ Ginibre ensemble, i.e., $X_n = \frac{1}{\sqrt n}(g_{ij})$ where $g_{ij}$ are independent normal complex Gaussian random variables.
Determine the asympotic distribution of $\mal(X_n)$.
\end{problem}

We performed some preliminary data generation to provide a starting point for this problem using matrices with independent real gaussian entries each with variance $1/n$.
We summarize the data generation in Table $\ref{summarytable_malGin}$. Note that $X_n$ is no longer necessarily contractive, but has operator norm close to $2$ with high probability.

\begin{figure}[ht]%
\centering
\includegraphics[width=4cm]{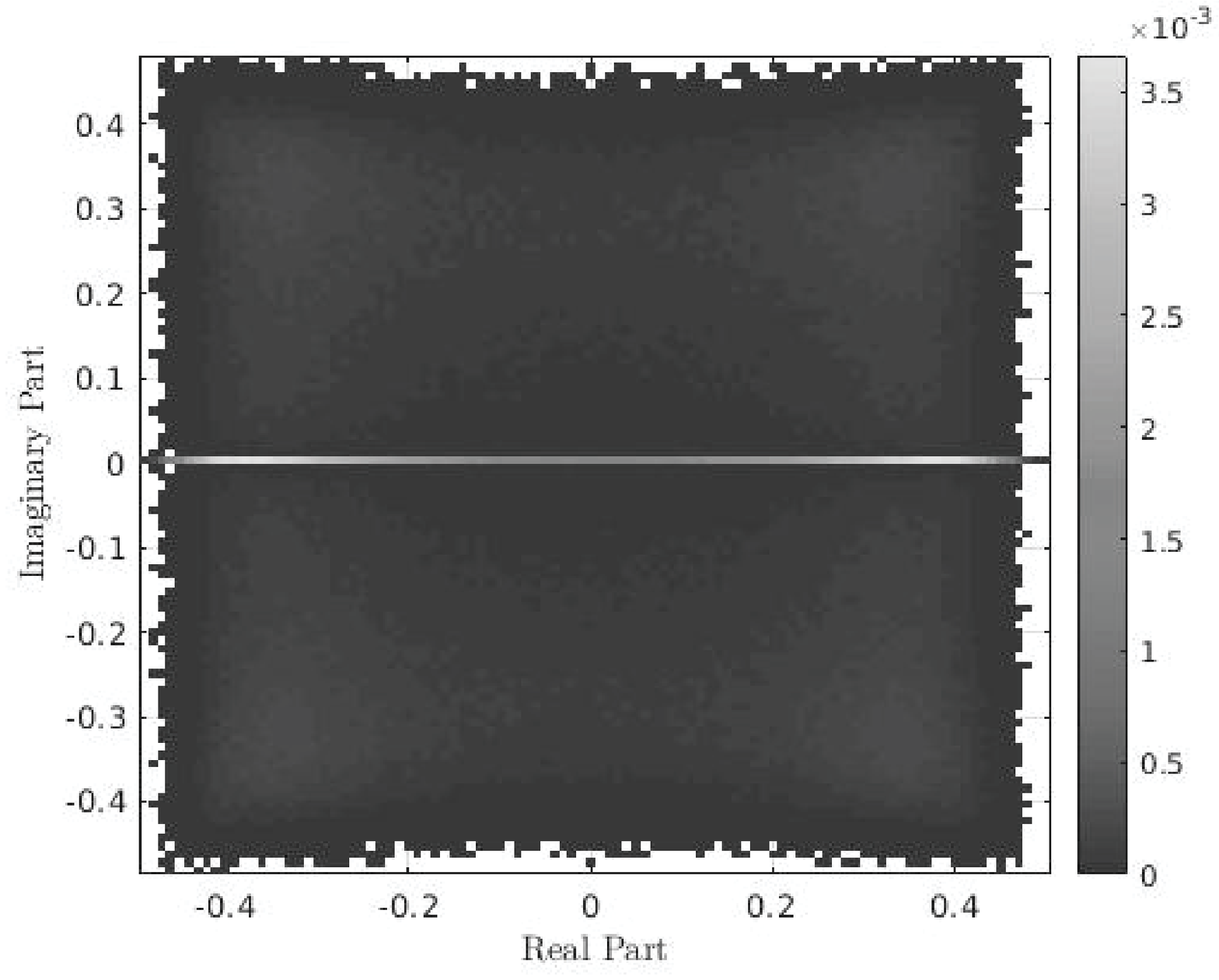}%
\qquad
\includegraphics[width=4cm]{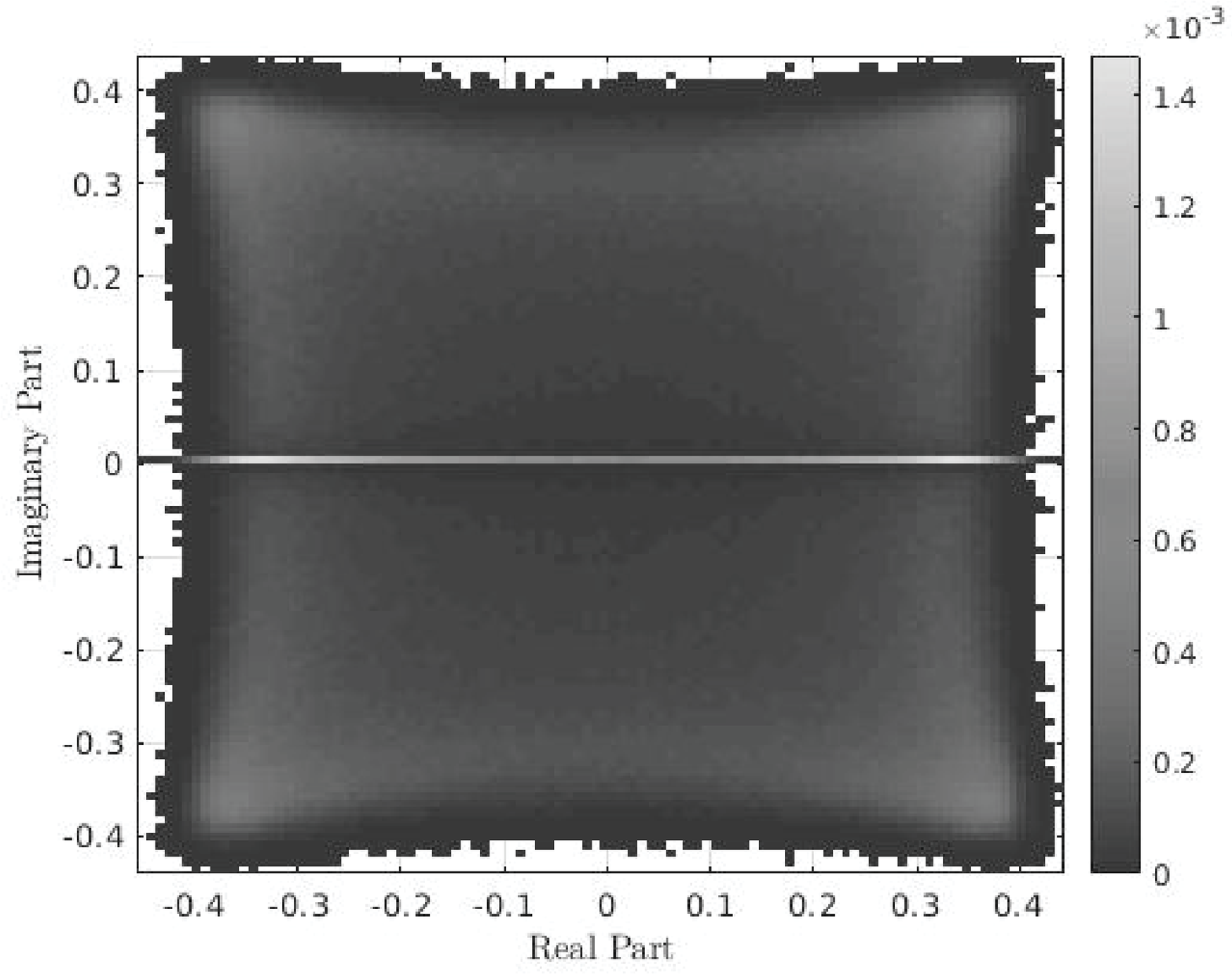}%
\qquad
\includegraphics[width=4cm]{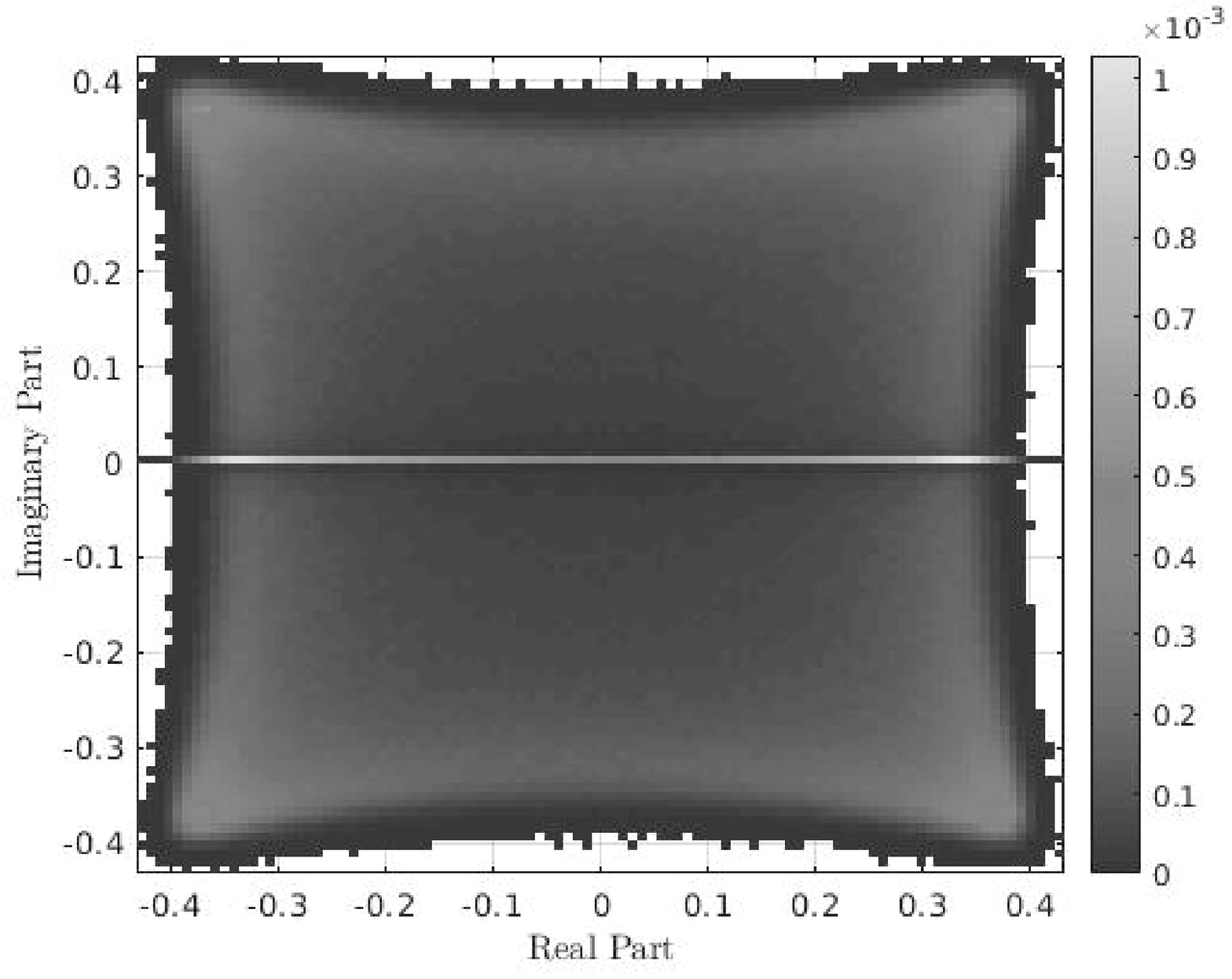}
\caption{Eigenvalues of $10^5$ matrices in $\mathcal{J}_{n}(O)$ for n = 10, 50, 100}%
\label{figorth}%
\end{figure}

\begin{figure}[ht]%
\centering
\includegraphics[width=4cm]{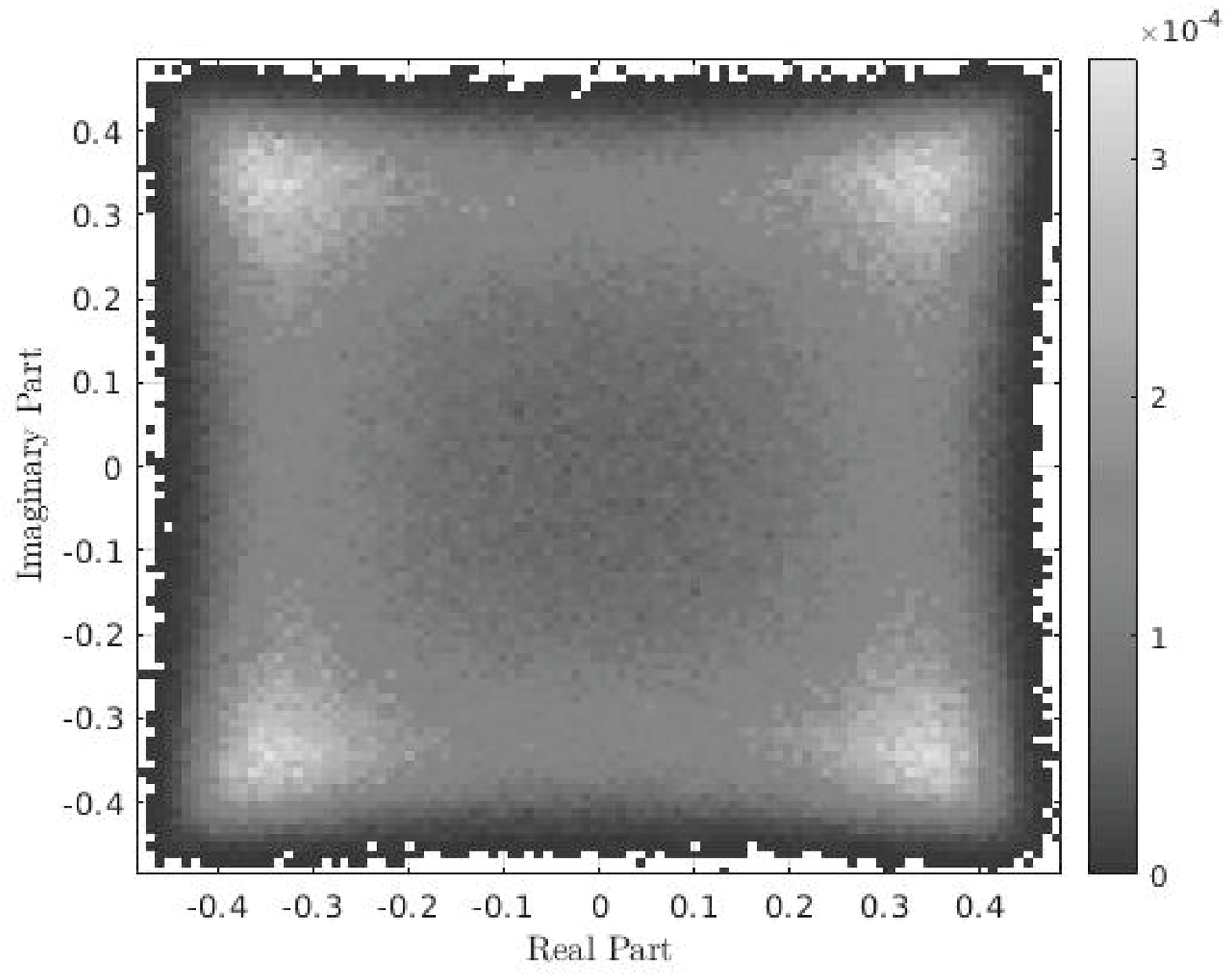}%
\qquad
\includegraphics[width=4cm]{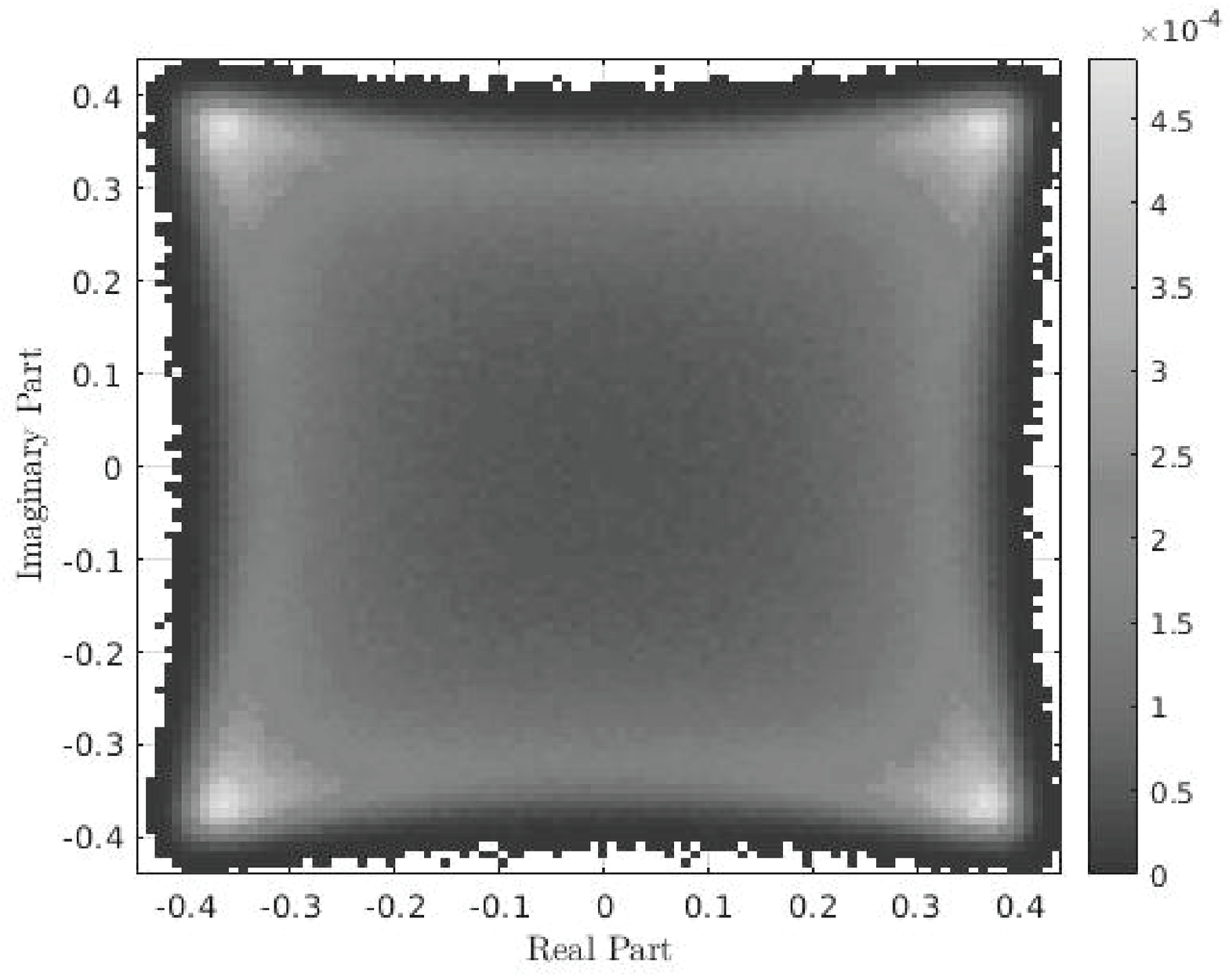}%
\qquad
\includegraphics[width=4cm]{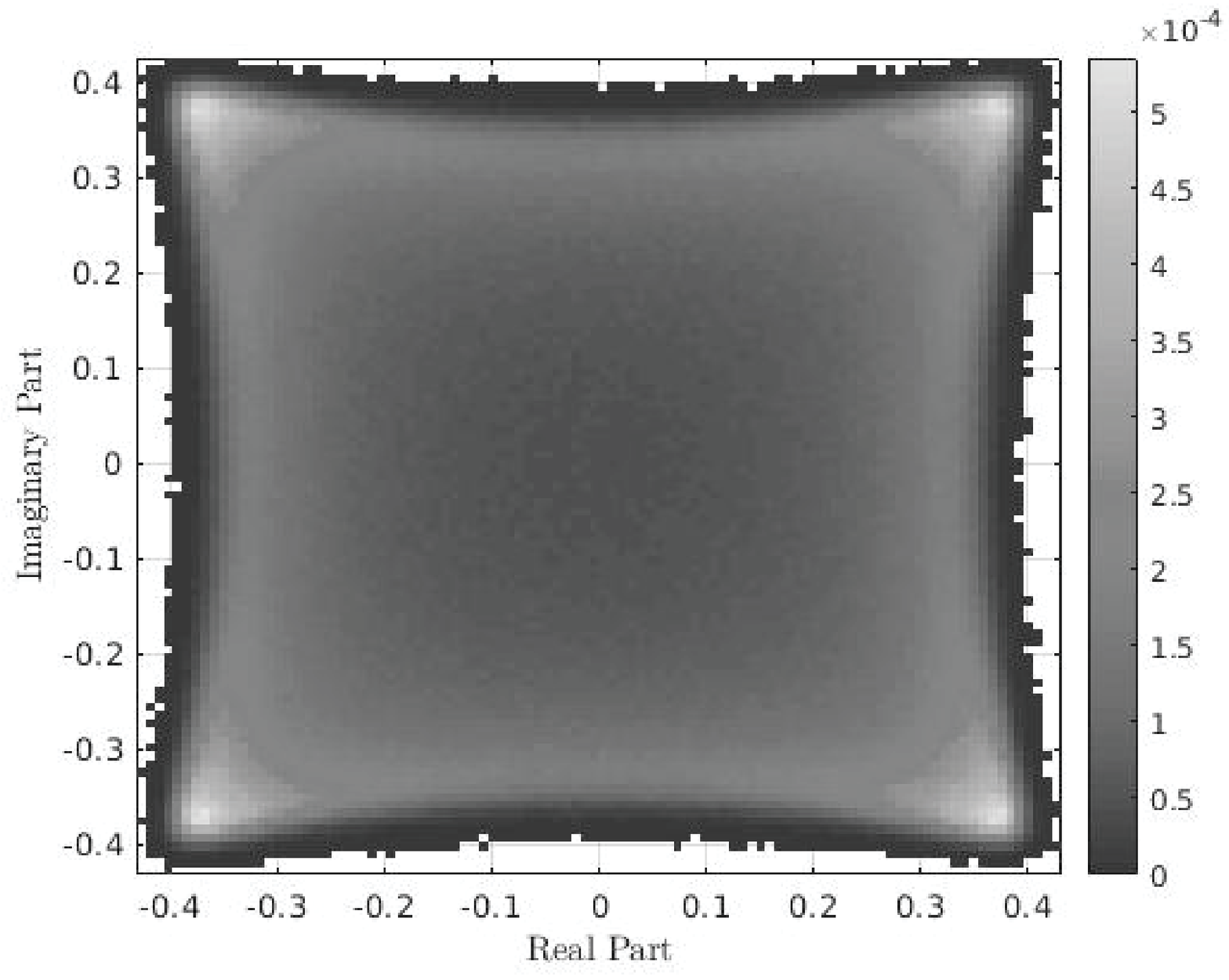}
\caption{Eigenvalues of $10^5$ matrices in $\mathcal{J}_{n}(U)$ for n = 10, 50, 100}%
\label{figunitary}%
\end{figure}

\begin{table} [H]
   \centering
   \begin{tabular}{|m{3em}|m{5em}|m{5em}|m{8em}|m{8em}|} 
   \hline 
   n & Mean & Median & Variance & Size of Sample\\ [0.5ex]
   \hline  5 & 0.35186 & 0.34889 & 0.011115 & $10^6$\\ 10 & 0.33723 & 0.33898 & 0.0020601 & $10^5$\\ 15 & 0.33938 & 0.34114 & 0.00086524 & $10^5$\\ 20 & 0.34218 & 0.3439 & 0.00048062 & $10^4$\\ 25 & 0.34391 & 0.34541 & 0.00031995 & $10^4$\\ 30 & 0.34389 & 0.34559 & 0.00022611 & $10^4$ \\ \hline \end{tabular}
   \caption{Summary statistics of data generated from $\mal(X_{n})$}
   \label{summarytable_malGin}
\end{table}

\section*{Acknowledgements} The first author would like to thank Mark Ward for his advice and mentoring, especially in the computational aspects of the project, and Jeya Jeyakumar for  his consultation on the optimization component. The second author would like to thank Ilijas Farah and Bradd Hart for a conversation which sparked his interest in the subject. The second author is also indebted to Ben Hayes and David Sherman for several useful comments and suggestions. G.~Mulcahy was supported by NSF grant DMS-1246818. T.~Sinclair was supported by NSF grants DMS-1600857 and DMS-2055155.

\bibliographystyle{amsalpha}
\bibliography{bibliography}

\end{document}